\def\er{\mathbb{R}}
\def\Ex{\mathbb{E}}
\def\Pr{\mathbb{P}}
\def\ve{\varepsilon}
\def\calm{\mathcal{M}}
\def\calz{\mathcal{Z}}
\def\cale{\mathcal{E}}
\newtheorem{thm}{Theorem}
\newtheorem{lem}[thm]{Lemma}
\newtheorem{prop}[thm]{Proposition}
\newtheorem{cor}[thm]{Corollary}
\theoremstyle{remark}
\newtheorem{rem}[thm]{Remark}
\title{On $\calz_p$-norms of random vectors}
\author{Rafa{\l} Lata{\l}a
\thanks{Supported by the National Science Centre, Poland grant 2015/18/A/ST1/00553}}
\date{}
\begin{document}

\maketitle

\begin{abstract}
To any $n$-dimensional random vector $X$ we may associate its $L_p$-centroid body $\calz_p(X)$ and the 
corresponding norm. We formulate a conjecture concerning the bound on the $\calz_p(X)$-norm of $X$ and
show that it holds under some additional symmetry assumptions. We also relate our conjecture with estimates
of covering numbers and Sudakov-type minorization bounds.
\end{abstract}

\section{Introduction. Formulation of the Problem.}

Let $p\geq 2$ and $X=(X_1,\ldots,X_n)$ be a random vector in $\er^n$ such that $\Ex|X|^p<\infty$.
We define the following two norms on $\er^n$:
\[
\|t\|_{\calm_p(X)}:=(\Ex|\langle t,X\rangle|^p)^{1/p}
\quad\mbox{and}\quad
\|t\|_{\calz_p(X)}:=\sup\{|\langle t,s\rangle|\colon\ \|s\|_{\calm_p(X)}\leq 1\}.
\]
By $\calm_p(X)$ and $\calz_p(X)$ we will also denote unit balls in these norms, i.e.
\[
\calm_p(X):=\{t\in \er^n\colon\ \|t\|_{\calm_p(X)}\leq 1\}
\mbox{ and }
\calz_p(X):=\{t\in \er^n\colon\ \|t\|_{\calz_p(X)}\leq 1\}.
\]

The set $\calz_p(X)$ is called the $L_p$-centroid body of $X$ (or rather of the distribution of $X$).
It was introduced (under a different normalization) for uniform distributions on convex bodies in \cite{LZ}. 
Investigation of $L_p$-centroid bodies played a crucial role in the Paouris proof of large deviations bounds for 
Euclidean norms of log-concave vectors \cite{Pa}. Such bodies also appears in questions related to the optimal concentration of log-concave vectors \cite{La2}.

Let us introduce a bit of useful notation. 
We set $|t|:=\|t\|_2=\sqrt{\langle t,t\rangle}$ and $B_2^n=\{t\in \er^n\colon |t|\leq 1\}$. 
By  $\|Y\|_p=(\Ex|Y|^p)^{1/p}$ we denote the $L_p$-norm of a random variable $Y$.
Letter $C$ denotes universal constants (that may differ at each occurence), we write $f\sim g$ if
$\frac{1}{C}f\leq g\leq Cf$.

Let us begin with a simple case, when a random vector $X$ is rotationally invariant. Then $X=RU$, 
where $U$ has a uniform distribution on $S^{n-1}$  and $R=|X|$ is a nonnegative random variable, independent of $U$.
We have for any vector $t\in \er^n$ and $p\geq 2$,
\[
\|\langle t,U\rangle\|_p=|t|\|U_1\|_p\sim \sqrt{\frac{p}{n+p}}|t|,
\]
where $U_1$ is the first coordinate of $U$.
Therefore
\[
\|t\|_{\calm_p(X)}=\|U_1\|_p\|R\|_p|t|
\quad \mbox{and} \quad
\|t\|_{\calz_p(X)}=\|U_1\|_p^{-1}\|R\|_p^{-1}|t|.
\]
So
\begin{equation}
\label{eq:rotinv}
\left(\Ex\|X\|_{\calz_p(X)}^p\right)^{1/p}=\|U_1\|_p^{-1}\|R\|_p^{-1}(\Ex|X|^p)^{1/p}
=\|U_1\|_p^{-1}\sim \sqrt{\frac{n+p}{p}}.
\end{equation}

This motivates the following problem.
\medskip

\noindent
{\bf Problem 1.} Is it true that for (at least a large class of) centered $n$-dimensional random vectors $X$, 
\[
\left(\Ex \|X\|_{\calz_p(X)}^2\right)^{1/2}\leq C\sqrt{\frac{n+p}{p}} \quad \mbox{ for }p\geq 2,
\]
or maybe even
\[
\left(\Ex \|X\|_{\calz_p(X)}^p\right)^{1/p}\leq C\sqrt{\frac{n+p}{p}} \quad \mbox{ for }p\geq 2?
\]

\medskip

Notice that the problem is linearly-invariant, since 
\begin{equation}
\label{eq:inv}
\|AX\|_{{\cal Z}_p(AX)}=\|X\|_{{\cal Z}_p(X)}
\quad \mbox{ for any }A\in \mathrm{GL}(n).
\end{equation}

For any centered random vector $X$ with nondegenerate covariance matrix, random vector $Y=\mathrm{Cov}(X)^{-1/2}X$ is
isotropic (i.e. centered with identity covariance matrix). We have $\calm_2(Y)=\calz_2(Y)=B_2^n$, hence
\[
\Ex\|X\|_{\calz_2(X)}^2=\Ex\|Y\|_{\calz_2(Y)}^2=\Ex|Y|^2=n.
\] 
Next remark shows that the answer to our problem is positive in the case $p\geq n$.

\begin{rem}
\label{rem:largen}
For $p\geq n$ and any $n$-dimensional random vector $X$ we have $(\Ex\|X\|_{\calz_p(X)}^p)^{1/p}\leq 10$.
\end{rem}

\begin{proof}
Let $S$ be a $1/2$-net in the unit ball of $\calm_p(X)$ such that $|S|\leq 5^n$ (such net exists by the 
volume-based argument, cf. \cite[Corollary 4.1.15]{AGM}). Then
\begin{align*}
(\Ex\|X\|_{\calz_p(X)}^p)^{1/p}
&\leq 2\left(\Ex\sup_{t\in S}|\langle t,X\rangle|^p\right)^{1/p}
\leq 2\left(\Ex\sum_{t\in S}|\langle t,X\rangle|^p\right)^{1/p}
\\
&\leq 2|S|^{1/p}\sup_{t\in S}(\Ex\langle t,X\rangle|^p)^{1/p}
\leq 2\cdot 5^{n/p}.
\end{align*}
\end{proof}

$L_p$-centroid bodies play an important role in the study of vectors uniformly distributed on convex bodies and a more general class of log-concave vectors. A random vector with a nondenerate covariance matrix is called log-concave if its density has the form $e^{-h}$, where $h\colon\ \er^n\to (-\infty,\infty]$ is convex. If $X$ is 
centered and log-concave then
\begin{equation}
\label{eq:momlogc}
\|\langle t,X\rangle\|_p\leq \lambda \frac{p}{q}\|\langle t,X\rangle\|_q\quad \mbox{ for }p\geq q\geq 2,
\end{equation}
where  $\lambda=2$ ($\lambda=1$ if $X$ is symmetric and log-concave and $\lambda=3$ for arbitrary log-concave vectors). One of open problems for log-concave vectors \cite{La2} states that for such vectors, arbitrary norm
$\|\ \|$ and $q\geq 1$,
\[
(\Ex \|X\|^q)^{1/q}\leq C\left(\Ex\|X\|+\sup_{\|t\|_*\leq 1}\|\langle t,X\rangle\|_q\right).
\]
In particular one may expect that for log-concave vectors
\[
(\Ex \|X\|_{\calz_p(X)}^q)^{1/q}\leq C\left(\Ex\|X\|_{\calz_p(X)}+\sup_{t\in \calm_p(X)}\|\langle t,X\rangle\|_q\right)
\leq C\left(\Ex\|X\|_{\calz_p(X)}+\frac{\max\{p,q\}}{p}\right).
\]

As a result it is natural to state the following variant of Problem 1.
\medskip

\noindent
{\bf Problem 2.} Let $X$ be a centered log-concave $n$-dimensional random vector. Is it true that
\[
(\Ex\|X\|_{\calz_p(X)}^q)^{1/q}\leq C\sqrt{\frac{n}{p}}\quad \mbox{ for }2\leq p\leq n,\ 1\leq q\leq \sqrt{pn}.
\]

In Section 2 we show that Problems 1 and 2 have affirmative solutions in the class of unconditional vectors.
In Section 3 we relate our problems to estimates of covering numbers. We also show that the first estimate in Problem 1 holds if the random vector $X$ satisfies the Sudakov-type minorization bound.

\section{Bounds for unconditional random vectors}

In this section we consider the class of \emph{unconditional} random vectors in $\er^n$, 
i.e. vectors $X$ having the same distribution as $(\ve_1 |X_1|,\ve_2 |X_2|,\ldots,\ve_n |X_n|)$, where
$(\ve_i)$ is a sequence of independent symmetric $\pm 1$ random variables (Rademacher sequence), independent of $X$.

Our first result shows that formula \eqref{eq:rotinv}  may be extended  to the unconditional case for $p$ even.
We use the standard notation -- for a multiindex $\alpha=(\alpha_1,\ldots,\alpha_n)$, $x\in \er^n$ and
$m=\sum \alpha_i$,
$x^{\alpha}:=\prod_i x_i^{\alpha_i}$ and $\binom{m}{\alpha}:=m!/(\prod_i \alpha_i!)$.

\begin{prop}
\label{prop:unceven}
We have for any $k=1,2,\ldots$ and any $n$-dimensional unconditional random vector $X$ such that 
$\Ex|X|^{2k}<\infty$,
\[
\left(\Ex\|X\|_{\calz_{2k}(X)}^{2k}\right)^{1/(2k)}\leq c_{2k}
:=\left(\sum_{\|\alpha\|_1=k}\frac{\binom{k}{\alpha}^2}{\binom{2k}{2\alpha}}\right)^{1/(2k)}
\sim \sqrt{\frac{n+k}{k}},
\]
where the summation runs over all multiindices $\alpha=(\alpha_1,\ldots,\alpha_n)$ with nonnegative
integer coefficients such that $\|\alpha\|_1=\sum_{i=1}^n\alpha_i=k$.
\end{prop}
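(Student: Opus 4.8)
The plan is to identify the dual norm concretely via a multinomial expansion, bound it pointwise by a single application of the Cauchy--Schwarz inequality with carefully chosen weights, and then integrate so that the distribution-dependent moments cancel. First, since $X$ is unconditional, $\Ex\langle s,X\rangle^{2k}$ is unchanged when we flip signs of coordinates of $s$, so $\|s\|_{\calm_{2k}(X)}$ depends only on $(|s_1|,\ldots,|s_n|)$; consequently $\|x\|_{\calz_{2k}(X)}=\||x|\|_{\calz_{2k}(X)}$ and in computing it we may assume $x_i\ge 0$ and restrict the supremum to $s$ with $s_i\ge 0$. Using unconditionality again ($\Ex X^\beta=0$ as soon as some $\beta_i$ is odd) and the multinomial formula,
\[
\|s\|_{\calm_{2k}(X)}^{2k}=\Ex\langle s,X\rangle^{2k}=\sum_{\|\alpha\|_1=k}\binom{2k}{2\alpha}\Ex[X^{2\alpha}]\,s^{2\alpha},
\qquad
\langle x,s\rangle^{k}=\sum_{\|\alpha\|_1=k}\binom{k}{\alpha}x^{\alpha}s^{\alpha}.
\]

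Next I would apply the Cauchy--Schwarz inequality to the second identity with the weights $d_\alpha:=\binom{2k}{2\alpha}\Ex[X^{2\alpha}]/\binom{k}{\alpha}$, writing $\binom{k}{\alpha}x^{\alpha}s^{\alpha}=\bigl(\binom{k}{\alpha}x^{2\alpha}/d_\alpha\bigr)^{1/2}\bigl(\binom{k}{\alpha}d_\alpha s^{2\alpha}\bigr)^{1/2}$. Since $\sum_\alpha \binom{k}{\alpha}d_\alpha s^{2\alpha}=\|s\|_{\calm_{2k}(X)}^{2k}$, this yields, for $x_i,s_i\ge 0$,
\[
\langle x,s\rangle^{2k}\le\Bigl(\sum_{\|\alpha\|_1=k}\frac{\binom{k}{\alpha}^2}{\binom{2k}{2\alpha}\,\Ex[X^{2\alpha}]}\,x^{2\alpha}\Bigr)\,\|s\|_{\calm_{2k}(X)}^{2k},
\]
whence $\|x\|_{\calz_{2k}(X)}^{2k}\le\sum_{\|\alpha\|_1=k}\binom{k}{\alpha}^2\,x^{2\alpha}\big/\bigl(\binom{2k}{2\alpha}\Ex[X^{2\alpha}]\bigr)$ for every $x$. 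Taking $x=X$ and integrating, the factors $\Ex[X^{2\alpha}]$ cancel and we obtain $\Ex\|X\|_{\calz_{2k}(X)}^{2k}\le\sum_{\|\alpha\|_1=k}\binom{k}{\alpha}^2/\binom{2k}{2\alpha}=c_{2k}^{2k}$, which is the claimed bound. Multiindices with $\Ex[X^{2\alpha}]=0$ are simply omitted from all the sums above: then $X^{2\alpha}=0$ almost surely, so these terms are absent from $\langle X,s\rangle^k$ for almost every realization, and discarding them only lowers the final constant.

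It then remains to check that $c_{2k}\sim\sqrt{(n+k)/k}$. For this I would use the elementary identity $\binom{k}{\alpha}^2/\binom{2k}{2\alpha}=\frac{(k!)^2}{(2k)!}\prod_i\binom{2\alpha_i}{\alpha_i}$ together with the generating-function computation $\sum_{\|\alpha\|_1=k}\prod_i\binom{2\alpha_i}{\alpha_i}=4^k\binom{n/2+k-1}{k}$, obtained by reading off the coefficient of $z^k$ in $\bigl(\sum_{a\ge 0}\binom{2a}{a}z^a\bigr)^n=(1-4z)^{-n/2}$. This gives the closed form $c_{2k}^{2k}=\frac{4^k}{\binom{2k}{k}}\binom{n/2+k-1}{k}$, and the asymptotics follow from Stirling ($4^k/\binom{2k}{k}\sim\sqrt{k}$, so its $2k$-th root is $\sim 1$) and the two-sided estimate $\bigl(\tfrac{n/2+k-1}{k}\bigr)^{k}\le\binom{n/2+k-1}{k}\le\bigl(\tfrac{e(n/2+k-1)}{k}\bigr)^{k}$, using $n/2+k-1\sim n+k$, after taking $2k$-th roots.

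I do not expect a genuine obstacle here: the whole argument rests on choosing the weights $d_\alpha$ so that one Cauchy--Schwarz factor is exactly $\|s\|_{\calm_{2k}(X)}^{2k}$ while the other integrates to the clean, distribution-free constant $c_{2k}^{2k}$. The only places needing a little care are the reduction to nonnegative coordinates (this is precisely where unconditionality, and not mere symmetry, is essential, since it is what makes $s\mapsto\|s\|_{\calm_{2k}(X)}$ a function of $(|s_i|)$ and makes all surviving moments $\Ex[X^{2\alpha}]$ nonnegative) and the bookkeeping for degenerate multiindices; both are routine.
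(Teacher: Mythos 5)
Your proof is correct and follows essentially the same route as the paper: the same multinomial expansion of $\Ex\langle s,X\rangle^{2k}$ and $\langle x,s\rangle^k$, the same weighted Cauchy--Schwarz chosen so that one factor is $\|s\|_{\calm_{2k}(X)}^{2k}$ and the moments $\Ex X^{2\alpha}$ cancel upon substituting $x=X$ and integrating. The only (harmless) difference is in the asymptotics of $c_{2k}$, where you evaluate $\sum_{\|\alpha\|_1=k}\prod_i\binom{2\alpha_i}{\alpha_i}$ exactly via the generating function $(1-4z)^{-n/2}$, while the paper just squeezes it between $\binom{n+k-1}{k}$ and $4^k\binom{n+k-1}{k}$ using $1\leq\binom{2l}{l}\leq 4^l$; your treatment of the degenerate multiindices with $\Ex X^{2\alpha}=0$ is also careful and correct.
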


\begin{proof}
Observe first that
\[
\Ex|\langle t,X\rangle|^{2k}=\Ex\left|\sum_{i=1}^n t_i\ve_iX_i\right|^{2k}
=\sum_{\|\alpha\|_1=k}\binom{2k}{2\alpha}t^{2\alpha}\Ex X^{2\alpha}.
\]
For any $t,s\in \er^n$ we have
\[
|\langle t,s\rangle|^{k}
=\sum_{\|\alpha\|_1=k}\binom{k}{\alpha}t^{\alpha}s^{\alpha}.
\]
So by the Cauchy-Schwarz inequality,
\[
\|s\|_{\calz_{2k}(X)}^{k}=\sup\{|\langle t,s\rangle|^{k}\colon \Ex|\langle t,X\rangle|^{2k}\leq 1\}
\leq \left(\sum_{\|\alpha\|_1=k}\frac{\binom{k}{\alpha}^2}{\binom{2k}{2\alpha}}
\frac{s^{2\alpha}}{\Ex X^{2\alpha}}\right)^{1/2}.
\]

To see that $c_{2k}\sim \sqrt{(n+k)/k}$ observe that
\[
\frac{\binom{k}{\alpha}^2}{\binom{2k}{2\alpha}}=
\binom{2k}{k}^{-1}\prod_{i=1}^{n}\binom{2\alpha_i}{\alpha_i}.
\]
Therefore, since $1\leq \binom {2l}{l}\leq 2^{2l}$, we get
\[
4^{-k}\binom{n+k-1}{k}\leq c_{2k}^{2k}\leq 4^k \binom{n+k-1}{k}.
\]
\end{proof}

\begin{cor}
Let $X$ be an unconditional $n$-dimensional random vector. Then 
\[
\left(\Ex\|X\|_{\calz_p(X)}^{2k}\right)^{1/2k}\leq C\sqrt{\frac{n+p}{p}}\quad 
\mbox{ for any positive integer }k\leq \frac{p}{2}.
\]
\end{cor}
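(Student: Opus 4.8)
The plan is to derive the corollary from Proposition~\ref{prop:unceven} by rounding $p$ down to an even integer and invoking two soft monotonicity facts, so that essentially no new computation is needed.

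First I would record the monotonicity of the centroid-body norm in the exponent. If $2\le q\le q'$, then $\|t\|_{\calm_q(X)}\le\|t\|_{\calm_{q'}(X)}$ for every $t$ by monotonicity of $L_r$-norms on a probability space; hence $\calm_{q'}(X)\subseteq\calm_q(X)$, and taking the supremum over this smaller set in the definition of the dual norm gives $\|s\|_{\calz_{q'}(X)}\le\|s\|_{\calz_q(X)}$ for every $s$. The second fact I would use is the (trivial) monotonicity of $r\mapsto(\Ex|Y|^r)^{1/r}$ in $r$.

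Then, given $p\ge2$ and a positive integer $k\le p/2$, I would set $m:=\lfloor p/2\rfloor$, so that $m$ is a positive integer with $k\le m$ and $2m\le p$, and $\Ex|X|^{2m}<\infty$ (we may of course assume $\Ex|X|^p<\infty$), so Proposition~\ref{prop:unceven} applies at exponent $2m$. Applying the first monotonicity with $q=2m\le p=q'$ gives the pointwise bound $\|X\|_{\calz_p(X)}\le\|X\|_{\calz_{2m}(X)}$; taking $2k$-th moments, then using $2k\le2m$ together with the second monotonicity, and finally Proposition~\ref{prop:unceven}, I obtain
\[
\bigl(\Ex\|X\|_{\calz_p(X)}^{2k}\bigr)^{1/(2k)}
\le\bigl(\Ex\|X\|_{\calz_{2m}(X)}^{2k}\bigr)^{1/(2k)}
\le\bigl(\Ex\|X\|_{\calz_{2m}(X)}^{2m}\bigr)^{1/(2m)}
=c_{2m}\sim\sqrt{\frac{n+m}{m}}.
\]
To finish I would pass from $m$ to $p$ on the right-hand side: since $p\ge2$ one has $p/4\le\lfloor p/2\rfloor=m\le p/2$, so $n/m\sim n/p$, and therefore $\sqrt{(n+m)/m}=\sqrt{1+n/m}\sim\sqrt{1+n/p}=\sqrt{(n+p)/p}$ with a universal constant, which gives the claimed bound.

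The argument has no real obstacle; the only points where I would be careful are the direction of the two monotonicities (a larger integrability exponent shrinks $\calm_q(X)$ and hence enlarges $\calz_q(X)$, so the $\calz_q$-norm decreases in $q$) and the elementary estimate $\lfloor p/2\rfloor\ge p/4$ for $p\ge2$ that is needed to keep the final constant universal.
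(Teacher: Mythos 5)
Your proposal is correct and follows essentially the same route as the paper: reduce to the even exponent $2\lfloor p/2\rfloor$ via monotonicity of $L_r$-norms, use the monotonicity $\|\cdot\|_{\calz_p(X)}\le\|\cdot\|_{\calz_{2m}(X)}$ for $2m\le p$, and invoke Proposition~\ref{prop:unceven}. The only difference is cosmetic (you keep $k$ and introduce $m=\lfloor p/2\rfloor$, while the paper simply assumes $k=\lfloor p/2\rfloor$ from the start), and your explicit checks of the two monotonicity directions and of $\lfloor p/2\rfloor\ge p/4$ are exactly the details the paper leaves implicit.
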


\begin{proof}
By the monotonicity of $L_{2k}$-norms we may and will assume that $k=\lfloor p/2\rfloor$.
Then by Proposition \ref{prop:unceven},
\[
\left(\Ex\|X\|_{\calz_p(X)}^{2k}\right)^{1/2k}\leq
\left(\Ex\|X\|_{\calz_{2k}(X)}^{2k}\right)^{1/2k}\leq C\sqrt{\frac{n+k}{k}}\leq C\sqrt{\frac{n+p}{p}}.
\]
\end{proof}

In the unconditional log-concave case we may bound higher moments of $\|X\|_{\calz_p(X)}$. 

\begin{thm}
\label{thm:unclogcon}
Let $X$ be an unconditional log-concave $n$-dimensional random vector. Then for $p,q\geq 2$,
\[
(\Ex\|X\|_{\calz_p(X)}^q)^{1/q}\leq 
C\left(\sqrt{\frac{n+p}{p}}+\sup_{t\in \calm_p(X)}\|\langle t,X\rangle\|_q\right)
\leq C\left(\sqrt{\frac{n+p}{p}}+\frac{q}{p}\right).
\]
\end{thm}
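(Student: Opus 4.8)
The plan is to combine the even-moment bound from Proposition~\ref{prop:unceven} with a comparison of moments valid for log-concave random variables. Set $k=\lceil q/2\rceil$ if $q\ge p$, and otherwise $k=\lfloor p/2\rfloor$; in either case $2k\ge 2$. First I would use the Corollary above (or directly Proposition~\ref{prop:unceven}) to control $\bigl(\Ex\|X\|_{\calz_p(X)}^{2k}\bigr)^{1/(2k)}$ by $C\sqrt{(n+p)/p}$ whenever $2k\le p$. This handles the regime $q\le p$ immediately, since then $\|X\|_{\calz_p(X)}\in L_{2k}$ with $2k$ as close to $q$ from above as parity allows, and $L_q$-norms are monotone.

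The real work is the regime $q>p$. Here I would split $q=2k$ with $k\sim q/2>p/2$, so Proposition~\ref{prop:unceven} no longer applies at exponent $2k$. Instead I would pass from exponent $p$ (where the centroid-body bound is good) up to exponent $q$ by a norm-comparison argument. The key point: $\|X\|_{\calz_p(X)}=\sup_{t\in\calm_p(X)}|\langle t,X\rangle|$ is the supremum of a family of linear functionals of the log-concave vector $X$, and for a single log-concave random variable $Y=\langle t,X\rangle$ one has $\|Y\|_q\le C\frac{q}{p}\|Y\|_p$ by \eqref{eq:momlogc}. One cannot simply apply this inside the supremum, but one can use the standard fact that $Z:=\|X\|_{\calz_p(X)}$, being a norm of a log-concave vector, is itself a (one-dimensional) log-concave, hence has $\psi_1$-type tails controlled by its mean — equivalently $\|Z\|_q\le Cq\|Z\|_1$, or more precisely $\|Z\|_q\le C\frac{q}{r}\|Z\|_r$ for $r\le q$. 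Taking $r=\max\{p,2\}$ and using the bound on $\|Z\|_{2\lfloor p/2\rfloor}$ (hence on $\|Z\|_p$ up to a constant, again by log-concavity and monotonicity) from the $q\le p$ case gives
\[
(\Ex Z^q)^{1/q}\le C\frac{q}{p}(\Ex Z^p)^{1/p}\le C\frac{q}{p}\sqrt{\frac{n+p}{p}}.
\]
This is not yet the claimed bound, so the final step is to sharpen: rather than pulling out the full factor $q/p$, I would split $Z\le \Ex Z+(Z-\Ex Z)$ and apply the moment comparison only to the centered part, or equivalently invoke the Borell-type lemma in the form $\|Z\|_q\le \Ex Z + C\frac{q}{p}\|Z-\Ex Z\|_p$, together with the observation that $\|Z-\Ex Z\|_p\le 2\|Z\|_p$. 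Combined with $\Ex Z\le \|Z\|_2\le C\sqrt{(n+p)/p}$ (this $L_2$-bound being exactly the $q=2$ instance already established), and with the elementary identification $\sup_{t\in\calm_p(X)}\|\langle t,X\rangle\|_q\le \frac{\max\{p,q\}}{p}$ coming from \eqref{eq:momlogc} applied coordinate-wise to the definition of $\calm_p(X)$, one assembles the stated two-sided conclusion.

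The main obstacle I expect is the passage through the supremum: the clean moment inequality \eqref{eq:momlogc} is a statement about a fixed linear functional, whereas $\|X\|_{\calz_p(X)}$ is a supremum over infinitely many of them, so one genuinely needs the one-dimensional log-concavity of $Z=\|X\|_{\calz_p(X)}$ (which holds because $X$ is log-concave and $\|\cdot\|_{\calz_p(X)}$ is a norm, so $Z$ is the pushforward of a log-concave measure under a seminorm, hence has a log-concave distribution on $[0,\infty)$) to get a moment-growth estimate for $Z$ itself. A secondary technical point is making the ``$\sqrt{(n+p)/p}$ versus $q/p$'' dichotomy come out additively rather than multiplicatively; this is precisely what the Borell-type centered version of the moment comparison buys, and it is why the theorem is stated with a sum of the two terms rather than a product.
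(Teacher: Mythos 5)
The first half of your plan (the case $q\le p$ via Proposition \ref{prop:unceven} and monotonicity of moments) is fine and matches the paper's Corollary. The gap is in the regime $q>p$, and it sits exactly at the step you flag as the ``sharpening''. One-dimensional log-concavity of $Z=\|X\|_{\calz_p(X)}$ (which does hold, by Pr\'ekopa--Borell) yields only the multiplicative comparison $\|Z\|_q\le C\frac{q}{p}\|Z\|_p$, and your proposed fix --- $\|Z\|_q\le \Ex Z+C\frac{q}{p}\|Z-\Ex Z\|_p$ followed by $\|Z-\Ex Z\|_p\le 2\|Z\|_p$ --- just reproduces the multiplicative bound $C\frac{q}{p}\sqrt{(n+p)/p}$, because you have no control of $\|Z-\Ex Z\|_p$ better than $\|Z\|_p$ itself. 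That bound is genuinely weaker than the claim: for $p\ll q\ll\sqrt{np}$ the target is $C\sqrt{n/p}$, while $\frac{q}{p}\sqrt{n/p}$ can be as large as $n/p$. To get the additive form one needs the fluctuation of $Z$ to be governed by the weak parameter $\sup_{t\in\calm_p(X)}\|\langle t,X\rangle\|_q$ rather than by $\Ex Z$; this is precisely the weak--strong moment comparison discussed in the introduction, and it is \emph{not} a consequence of the one-dimensional log-concavity of $Z$: a one-dimensional log-concave variable can perfectly well have $\|Z-\Ex Z\|_p$ comparable to $\Ex Z$ (e.g.\ an exponential variable), so the concentration you need must come from the multidimensional structure, not from the law of $Z$ alone.

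The paper supplies that multidimensional ingredient as follows: it reduces to $q\ge\sqrt{np}$, disposes of $q\ge n$ by a $5^n$-net union bound as in Remark \ref{rem:largen}, and for $\sqrt{np}\le q\le n$ splits $\|\cdot\|_{\calz_p(X)}$ via Lemma \ref{lem:estzpunc} (based on Hitczenko's two-sided estimate for Rademacher sums) into a supremum over $p$-sparse coordinate subsets --- handled by a union bound over all subsets of size at most $p$ together with $1/2$-nets of the corresponding sections, which is where $q\ge\sqrt{np}$ is used to absorb the factor $(en/p)^{p/q}$ --- and a supremum over $\calm_p(X)\cap p^{-1/2}B_2^n$, handled by the weak--strong moment theorem for unconditional log-concave vectors from \cite{La} after comparison with symmetric exponential variables. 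Some such ingredient is indispensable, and your argument as written does not contain one.
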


In order to show this result we will need the following lemma.

\begin{lem}
\label{lem:estzpunc}
Let $2\leq p\leq n$, $X$ be an unconditional random vector in $\er^n$ such that $\Ex|X|^p<\infty$ and $\Ex |X_i|=1$. 
Then
\begin{equation}
\label{eq:estzpunc}
\|s\|_{\calz_p(X)}
\leq \sup_{I\subset [n],|I|\leq p}\sup_{\|t\|_{\calm_p(X)}\leq 1}\left|\sum_{i\in I}t_is_i\right|+
C_1\sup_{\|t\|_{{\cal M}_p(X)}\leq 1,\|t\|_2\leq p^{-1/2}}\left|\sum_{i=1}^n t_is_i\right|.
\end{equation}
\end{lem}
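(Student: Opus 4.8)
The plan is to reduce the computation of the $\calz_p(X)$-norm to a comparison with a mixed $\ell_2$-$\ell_\infty$ structure, using the unconditionality of $X$ to gain control on how much each coordinate can contribute. The starting point is the dual description $\|s\|_{\calz_p(X)}=\sup\{|\langle t,s\rangle|\colon \|t\|_{\calm_p(X)}\leq 1\}$, so we must understand the unit ball $\calm_p(X)$. For an unconditional log-concave $X$ with $\Ex|X_i|=1$, a one-dimensional marginal argument gives $\|\langle t,X\rangle\|_p\geq c(\sum_i|t_i|\|X_i\|_p)$ up to constants on the relevant range, because the contribution of the $\ell_1$-type term dominates; combined with the two-sided log-concave moment comparison \eqref{eq:momlogc} and the bound $\|X_i\|_p\sim p\,\Ex|X_i|=p$ for $p$ up to the "effective dimension," one expects $\calm_p(X)$ to be sandwiched between multiples of $\{t\colon \sum_i|t_i|\|X_i\|_p\leq 1\}$ and a Euclidean ball of radius $\sim p^{-1/2}$, at least after splitting coordinates according to the size of $\|X_i\|_p$.

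The key step is a decomposition of the supremum defining $\|s\|_{\calz_p(X)}$. Fix $t$ with $\|t\|_{\calm_p(X)}\leq 1$ and split $[n]=I\cup I^c$ where $I$ is the set of the (at most $p$) largest coordinates $|t_i|$. On $I$ we simply keep the term $\sup_{I\subset[n],|I|\leq p}\sup_{\|t\|_{\calm_p(X)}\leq 1}|\sum_{i\in I}t_is_i|$, which is the first term on the right-hand side of \eqref{eq:estzpunc}. For the complement $I^c$, the point is that once the $p$ largest coordinates are removed, the remaining part of $t$ is "flat": since $\|\langle t,X\rangle\|_p\geq c\sum_i|t_i|\|X_i\|_p$ and each $\|X_i\|_p$ is comparable to its value, the tail vector $t\mathbf{1}_{I^c}$ must have small $\ell_2$-norm, of order $p^{-1/2}$, because $|I^c|$-many nearly equal coordinates summing appropriately in the $\calm_p$-norm force $\|t\mathbf{1}_{I^c}\|_2\lesssim p^{-1/2}\|t\|_{\calm_p(X)}$. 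Hence the $I^c$-part of $\langle t,s\rangle$ is bounded by $C_1\sup_{\|t\|_{\calm_p(X)}\leq 1,\ \|t\|_2\leq p^{-1/2}}|\sum_i t_is_i|$, the second term. Adding the two contributions and taking the supremum over $t$ yields \eqref{eq:estzpunc}.

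The main obstacle I anticipate is making the "flatness of the tail" step rigorous with the right constant: one needs to show that after discarding the top $p$ coordinates of $t$, the remaining vector, when rescaled to lie in $\calm_p(X)$, has Euclidean norm $O(p^{-1/2})$, and this requires a careful lower bound on $\|\langle t,X\rangle\|_p$ in terms of both $\sum_i|t_i|\|X_i\|_p$ and $(\sum_i t_i^2\|X_i\|_p^2)^{1/2}$ — essentially a Rosenthal-type two-sided estimate for sums of independent symmetric log-concave random variables. The unconditional log-concave structure is exactly what supplies such an estimate: by conditioning on $|X_i|$ and using the Rademacher signs, $\|\langle t,X\rangle\|_p$ is comparable to the $\calm_p$-norm of a canonical product measure, for which Rosenthal/Gluskin–Kwapień-type bounds give $\|\langle t,X\rangle\|_p\sim \sup_{|I|\leq p}\sum_{i\in I}|t_i|\Ex|X_i|+\sqrt{p}\,(\sum_i t_i^2\Ex X_i^2)^{1/2}$ up to the truncation level $p\leq n$. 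Once this comparison is in hand, the set $\{i\in I\}$ carrying at most $p$ coordinates and the Euclidean radius $p^{-1/2}$ on the complement both drop out of the formula, and \eqref{eq:estzpunc} follows by the triangle inequality applied to $\langle t\mathbf{1}_I,s\rangle+\langle t\mathbf{1}_{I^c},s\rangle$.
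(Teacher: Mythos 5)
Your decomposition of $\langle t,s\rangle$ into the top-$p$ coordinates of $t$ and the remainder, with the claim that the remainder has Euclidean norm $O(p^{-1/2})$, is exactly the skeleton of the correct argument. But the way you propose to justify the key ``flatness of the tail'' step does not work, for two reasons. First, the lemma assumes only that $X$ is unconditional with $\Ex|X_i|=1$ and $\Ex|X|^p<\infty$ --- there is no log-concavity and, more importantly, no independence of the coordinates. Your Rosenthal/Gluskin--Kwapie\'n two-sided comparison $\|\langle t,X\rangle\|_p\sim \sup_{|I|\leq p}\sum_{i\in I}|t_i|\Ex|X_i|+\sqrt{p}\,(\sum_i t_i^2\Ex X_i^2)^{1/2}$ is a statement about sums of \emph{independent} symmetric variables with regular (e.g.\ log-concave) tails; it is simply not available here. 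Second, the intermediate inequality $\|\langle t,X\rangle\|_p\geq c\sum_i|t_i|\,\|X_i\|_p$ that you invoke to start the argument is false in general: for a standard Gaussian vector one has $\|\langle t,X\rangle\|_p=\|t\|_2\|g\|_p$, which for spread-out $t$ is far smaller than $\sum_i|t_i|\,\|g\|_p$. So the route you sketch would not close.

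What actually makes the tail step work is much lighter, and only needs a \emph{lower} bound on $\|t\|_{\calm_p(X)}$ by a pure Rademacher sum: writing $X\overset{d}{=}(\ve_i|X_i|)_i$ and conditioning on the signs, Jensen's inequality gives
\[
\|t\|_{\calm_p(X)}=\Bigl\|\sum_{i=1}^n t_i\ve_i|X_i|\Bigr\|_p\geq \Bigl\|\sum_{i=1}^n t_i\ve_i\,\Ex|X_i|\Bigr\|_p=\Bigl\|\sum_{i=1}^n t_i\ve_i\Bigr\|_p ,
\]
using the normalization $\Ex|X_i|=1$. Hitczenko's two-sided formula for Rademacher averages, $\|\sum a_i\ve_i\|_p\sim \sum_{i\leq p}a_i^*+\sqrt{p}(\sum_{i>p}|a_i^*|^2)^{1/2}$, then yields $\sqrt{p}\,(\sum_{i>p}|t_i^*|^2)^{1/2}\leq C_1\|t\|_{\calm_p(X)}$, i.e.\ after removing the $p$ largest coordinates the remaining vector has $\ell_2$-norm at most $C_1p^{-1/2}$ whenever $\|t\|_{\calm_p(X)}\leq 1$ (and it stays in $\calm_p(X)$ by unconditionality and the contraction principle). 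This uses no independence and no log-concavity, only unconditionality plus Jensen. Replace your Rosenthal-type comparison by this Jensen--Hitczenko step and your proof is complete.
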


\begin{proof}
We have by the unconditionality of $X$ and Jensen's inequality,
\[
\|t\|_{{\cal M}_p(X)}=\left\|\sum_{i=1}^n t_i \ve_i |X_i|\right\|_p\geq 
\left\|\sum_{i=1}^n t_i \ve_i \Ex |X_i|\right\|_p.
\]

By the result of Hitczenko \cite{H}, for numbers $a_1,\ldots,a_n$,
\begin{equation}
\label{eq:H}
\left\|\sum_{i=1}^n a_i \ve_i \right\|_p\sim \sum_{i\leq p}a_i^*+\sqrt{p}\left(\sum_{i>p}|a_i^*|^2\right)^{1/2},
\end{equation}
where $(a_i^*)_{i\leq n}$ denotes the nonincreasing rearrangement of $(|a_i|)_{i\leq n}$.
Thus
\[
\sqrt{p}\left(\sum_{i>p}|t_i^*|^2\right)^{1/2}\leq C_1\|t\|_{\calm_p(X)}
\]
and \eqref{eq:estzpunc} easily follows.
\end{proof}

\begin{proof}[Proof of Theorem \ref{thm:unclogcon}]

The last bound in the assertion follows by \eqref{eq:momlogc}.
It is easy to see that (increasing $q$ if necessary) it is enough to consider the case $q\geq \sqrt{np}$. 

If $q\geq n$ then the similar argument as in the proof of Remark \ref{rem:largen} shows that 
\[
\left(\Ex\|X\|_{\calz_p(X)}^q\right)^{1/q}\leq 2\cdot 5^{n/q}\sup_{t\in \calm_p(X)}\|\langle t,X\rangle\|_q
\leq 10 \sup_{t\in \calm_p(X)}\|\langle t,X\rangle\|_q.
\]

Finally, consider the remaining case $\sqrt{pn}\leq q\leq n$.
By \eqref{eq:inv} we may assume that $\Ex |X_i|=1$ for all $i$. 
By the log-concavity $\|\langle t,X\rangle \|_{q_1}\leq C\frac{q_1}{q_2}\|\langle t,X\rangle\|_{q_2}$ for $q_1\geq q_2\geq 1$, 
in particular $\sigma_i:=\|X_i\|_2\leq C$.

Let $\cale_1,\ldots,\cale_n$ be i.i.d. symmetric exponential random variables with variance $1$. By \cite[Theorem 3.1]{La} we have
\begin{align*}
&\left\|\sup_{\|t\|_{\calm_p(X)}\leq 1,\|t\|_2\leq p^{-1/2}}\left|\sum_{i=1}^n t_iX_i\right|\right\|_q
\\
&\phantom{aaaaaa}
\leq 
C\left(\left\|\sup_{\|t\|_{\calm_p(X)}\leq 1,\|t\|_2\leq p^{-1/2}}\left|\sum_{i=1}^nt_i\sigma_i\cale_i\right|\right\|_1
+\sup_{\|t\|_{\calm_p(X)}\leq 1,\|t\|_2\leq p^{-1/2}}\|\langle t,X\rangle\|_q
\right).
\end{align*}
We have
\[
\sup_{\|t\|_{\calm_p(X)}\leq 1,\|t\|_2\leq p^{-1/2}}\|\langle t,X\rangle\|_q\leq 
\sup_{\|t\|_{\calm_p(X)}\leq 1}\|\langle t,X\rangle\|_q
\]
and
\[
\left\|\sup_{\|t\|_{\calm_p(X)}\leq 1,\|t\|_2\leq p^{-1/2}}\left|\sum_{i=1}^nt_i\sigma_i\cale_i\right|\right\|_1
\leq \frac{1}{\sqrt{p}}\left\|\sqrt{\sum_{i=1}^n\sigma_i^2\cale_i^2}\right\|_1
\leq \frac{1}{\sqrt{p}}\sqrt{\sum_{i=1}^n\sigma_i^2}\leq C\sqrt{\frac{n}{p}}.
\]
Thus 
\[
\left\|\sup_{\|t\|_{\calm_p(X)}\leq 1,\|t\|_2\leq p^{-1/2}}\left|\sum_{i=1}^n t_iX_i\right|\right\|_q\leq 
C\left(\sqrt{\frac{n}{p}}+\sup_{\|t\|_{\calm_p(X)}\leq 1}\|\langle t,X\rangle\|_q\right).
\]

Let for each $I\subset[n]$, $P_IX=(X_i)_{i\in I}$ and $S_I$ be a $1/2$-net in $\calm_p(P_IX)$ of
cardinality at most $5^{|I|}$. We have
\begin{align*}
\left\|\sup_{I\subset [n],|I|\leq p}\sup_{\|t\|_{\calm_p(X)}\leq 1}\left|\sum_{i\in I}t_iX_i\right|\right\|_q
&\leq 2\left\|\sup_{I\subset [n],|I|\leq p}\sup_{t\in S_I}\left|\sum_{i\in I}t_iX_i\right|\right\|_{q}
\\
&\leq 2\left(\sum_{I\subset [n],|I|\leq p}\sum_{t\in S_I}\Ex\left|\sum_{i\in I}t_iX_i\right|^{q}\right)^{1/q}
\\
&\leq 2\cdot 5^{p/q}|\{I\subset [n], |I|\leq p\}|^{1/q}
\sup_{I}\sup_{t\in S_I}\left\|\sum_{i\in I}t_iX_i\right\|_{q}
\\
&\leq 10 \left(\frac{en}{p}\right)^{p/q}\sup_{t\in \calm_p(X)}\left\|\sum_{i\in I}t_iX_i\right\|_{q}
\\
&\leq C\sup_{t\in \calm_p(X)}\left\|\sum_{i\in I}t_iX_i\right\|_{q},
\end{align*}
where the last estimate follows from $q\geq \sqrt{np}$.

Hence the assertion follows by Lemma \ref{lem:estzpunc}.
\end{proof}

\begin{cor}
\label{cor:unclogcon}
Let $X$ be an unconditional log-concave $n$-dimensional random vector and $2\leq p\leq n$. Then 
\begin{equation}
\label{eq:momunclogconc}
\frac{1}{C}\sqrt{\frac{n}{p}}\leq \Ex \|X\|_{\calz_p(X)}
\leq \left(\Ex  \|X\|_{\calz_p(X)}^{\sqrt{np}}\right)^{1/\sqrt{np}}\leq C\sqrt{\frac{n}{p}}
\end{equation}
and
\[
\Pr\left(\|X\|_{\calz_p(X)}\geq \frac{1}{C}\sqrt{\frac{n}{p}}\right)\geq \frac{1}{C},\quad
\Pr\left(\|X\|_{\calz_p(X)}\geq Ct\sqrt{\frac{n}{p}}\right)\leq e^{-t\sqrt{np}} \mbox{ for }t\geq 1.
\]
\end{cor}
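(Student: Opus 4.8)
The plan is to split the statement into three parts: the upper moment bound in \eqref{eq:momunclogconc} and the exponential tail bound will be read off directly from Theorem~\ref{thm:unclogcon}; the first (lower) probability bound will follow from the two matching moment estimates by the Paley--Zygmund inequality; and the lower moment bound in \eqref{eq:momunclogconc} is the real content.

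For the upper parts: since $2\le p\le n$ we have $\sqrt{(n+p)/p}\le 2\sqrt{n/p}$, so Theorem~\ref{thm:unclogcon} gives $(\Ex\|X\|_{\calz_p(X)}^r)^{1/r}\le C(\sqrt{n/p}+r/p)$ for every $r\ge 2$. Taking $r=t\sqrt{np}$ with $t\ge 1$ yields $(\Ex\|X\|_{\calz_p(X)}^{t\sqrt{np}})^{1/(t\sqrt{np})}\le Ct\sqrt{n/p}$; the case $t=1$ is the rightmost inequality of \eqref{eq:momunclogconc}, the middle inequality there is Jensen's inequality, and Chebyshev's inequality applied with the $(t\sqrt{np})$-th moment produces $\Pr(\|X\|_{\calz_p(X)}\ge Ct\sqrt{n/p})\le e^{-t\sqrt{np}}$ after adjusting $C$. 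For the lower probability bound I would feed $\Ex\|X\|_{\calz_p(X)}\ge\frac1C\sqrt{n/p}$ (proved below) and $(\Ex\|X\|_{\calz_p(X)}^2)^{1/2}\le C\sqrt{n/p}$ (the case $r=2$, legitimate since $2\le\sqrt{np}$) into the Paley--Zygmund inequality, which gives $\Pr(\|X\|_{\calz_p(X)}\ge\frac12\Ex\|X\|_{\calz_p(X)})\ge\frac1C$.

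For the lower moment bound: by \eqref{eq:inv} I may assume $\Ex|X_i|=1$ for all $i$, whence $1\le\sigma_i:=\|X_i\|_2\le C$ by log-concavity. Put $N_p:=\big\|\sum_{i=1}^n\ve_i|X_i|\big\|_p$. The key observation is that, by unconditionality, every sign vector $\eta\in\{-1,1\}^n$ satisfies $\|\eta\|_{\calm_p(X)}=\big\|\sum_i\eta_iX_i\big\|_p=\big\|\sum_i\ve_i|X_i|\big\|_p=N_p$, independently of $\eta$; hence $\eta/N_p$ belongs to the (deterministic) unit ball of $\calm_p(X)$ for every such $\eta$. Taking $\eta=(\mathrm{sign}(X_i))_i$ I get $\|X\|_{\calz_p(X)}\ge\big\langle X,\tfrac1{N_p}\mathrm{sign}(X)\big\rangle=\tfrac1{N_p}\sum_{i=1}^n|X_i|$, and therefore $\Ex\|X\|_{\calz_p(X)}\ge n/N_p$.

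So everything reduces to the estimate $N_p=\|\langle\mathbf1,X\rangle\|_p\le C\sqrt{np}$ for $2\le p\le n$, and this is the step I expect to be the main obstacle: the bound available for free from log-concavity, $\|\langle\mathbf1,X\rangle\|_p\le Cp\|\langle\mathbf1,X\rangle\|_2\le Cp\sqrt n$, is a factor $\sqrt p$ too weak and would only give $\Ex\|X\|_{\calz_p(X)}\gtrsim\sqrt n/p$. To recover the factor I would use that for unconditional log-concave vectors a single linear form has exponential- rather than merely log-concave-type moments: writing $X=DY$ with $D=\mathrm{diag}(\sigma_i)$ and $Y$ unconditional isotropic log-concave, the Bobkov--Nazarov comparison gives $\|\langle\mathbf1,X\rangle\|_p=\|\langle D\mathbf1,Y\rangle\|_p\le C\|\langle D\mathbf1,Z\rangle\|_p$, where $Z=(\ve_i\cale_i)$ with $\cale_i$ i.i.d.\ symmetric exponential of variance $1$, and the Gluskin--Kwapie\'n estimates for linear combinations of independent exponentials then give $\|\langle D\mathbf1,Z\rangle\|_p\le C\big(\sqrt p\,|D\mathbf1|+p\|D\mathbf1\|_\infty\big)\le C(\sqrt{np}+p)\le C\sqrt{np}$ since $\sigma_i\le C$ and $p\le n$. (Alternatively, one may apply \eqref{eq:H} conditionally on $(|X_i|)_i$ and bound the two resulting terms---the sum of the $p$ largest of the $|X_i|$, and $\sqrt p$ times the $\ell_2$-norm of the remaining ones---by Paouris-type tail estimates for $|X|$; the first route is cleaner.) Combining these, $\Ex\|X\|_{\calz_p(X)}\ge n/N_p\ge\frac1C\sqrt{n/p}$, which finishes the proof.
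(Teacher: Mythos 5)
Your proposal is correct, and the upper-moment, upper-tail and Paley--Zygmund steps coincide with the paper's. The interesting difference is in the lower bound $\Ex\|X\|_{\calz_p(X)}\geq \frac1C\sqrt{n/p}$. The paper normalizes $X$ to be isotropic, uses the Bobkov--Nazarov bound $\|\langle t,X\rangle\|_p\leq C(\sqrt p\|t\|_2+p\|t\|_\infty)$ to place suitably rescaled sign vectors supported on the top $\lceil n/2\rceil$ coordinates inside $\calm_p(X)$, and then needs the separate Lemma \ref{lem:esthalf} to guarantee $\Ex X^*_{\lceil n/2\rceil}\geq 1/C$. You instead normalize $\Ex|X_i|=1$, observe that by unconditionality every sign vector $\eta$ has the same $\calm_p(X)$-norm $N_p=\|\sum_i X_i\|_p$, take $\eta=\mathrm{sign}(X)$ to get $\Ex\|X\|_{\calz_p(X)}\geq n/N_p$, and reduce everything to the single-functional estimate $N_p\leq C\sqrt{np}$ --- which again comes from Bobkov--Nazarov applied to the isotropic vector $D^{-1}X$ with test vector $D\mathbf 1$ (using $1\leq\sigma_i\leq C$ from the normalization and reverse H\"older). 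Both arguments lean on Bobkov--Nazarov in an essential way, but yours bypasses Lemma \ref{lem:esthalf} and the order-statistic argument entirely, at the modest cost of the normalization step; it is a clean and slightly more economical derivation of the same bound. All the individual claims you make (the identity $\|\eta\|_{\calm_p(X)}=N_p$ for deterministic sign vectors, the isotropy of $D^{-1}X$, and $\sqrt{np}+p\leq 2\sqrt{np}$ for $p\leq n$) check out.
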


\begin{proof}
The upper bound in \eqref{eq:momunclogconc} easily follows by
Theorem \ref{thm:unclogcon}. In fact we have for $t\geq 1$,
\[
\left(\Ex  \|X\|_{\calz_p(X)}^{t\sqrt{np}}\right)^{1/(t\sqrt{np})}\leq Ct\sqrt{\frac{n}{p}},
\]
hence the Chebyshev inequality yields the upper tail bound for $\|X\|_{\calz_p(X)}$.

To establish lower bounds we may assume that $X$ is additionally isotropic. Then by the result of 
Bobkov and Nazarov \cite{BN} we have 
$\|\langle t,X\rangle \|_p\leq C(\sqrt{p}\|t\|_2+p\|t\|_{\infty})$. This easily gives
\[
\Ex \|X\|_{\calz_p(X)}\geq \frac{1}{C}\sqrt{\frac{n}{p}}\Ex X_{\lceil n/2\rceil }^*
\geq \frac{1}{C}\sqrt{\frac{n}{p}},
\]
where the last inequality follows by Lemma \ref{lem:esthalf} below.

By the Paley-Zygmund inequality we get
\[
\Pr\left(\|X\|_{\calz_p(X)}\geq \frac{1}{C}\sqrt{\frac{n}{p}}\right)
\geq \Pr\left(\|X\|_{\calz_p(X)}\geq \frac{1}{2}\Ex \|X\|_{\calz_p(X)} \right)
\geq \frac{(\Ex \|X\|_{\calz_p(X)})^2}{4\Ex \|X\|_{\calz_p(X)}^2}\geq c.
\]
\end{proof}

\begin{lem}
\label{lem:esthalf}
Let $X$ by a symmetric isotropic $n$-dimensional log-concave vector. Then 
$\Ex X_{\lceil n/2\rceil }^*\geq \frac{1}{C}$.
\end{lem}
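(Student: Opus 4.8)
The plan is to show that a fixed positive fraction of the coordinates $|X_i|$ exceed a universal constant, with probability bounded away from zero, so that their $\lceil n/2\rceil$-th largest value is bounded below as well; the only genuinely probabilistic input is a pointwise bound on one-dimensional log-concave densities, and the rest is a soft first-moment computation.

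\medskip

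\textbf{Step 1 (anti-concentration of a single coordinate).} Each coordinate $X_i$ is a one-dimensional log-concave random variable (marginals of log-concave vectors are log-concave), and isotropy gives $\Ex X_i=0$, $\Ex X_i^2=1$. I would use the classical fact that a log-concave density on $\er$ with variance $1$ is bounded by a universal constant $C_0$; a self-contained proof is short (if $f_i$ is the density of $X_i$, $M=\|f_i\|_\infty$ and $a=\sup\{x\colon f_i\ge M/e\}$, then $\int f_i=1$ forces $a\le e/(2M)$, log-concavity gives $f_i(x)\le Me^{-x/a}$ for $x\ge a$, and inserting these bounds into $\int x^2f_i=1$ yields $M\le C_0$). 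Hence, setting $\delta_0:=1/(8C_0)$,
\[
\Pr(|X_i|<\delta_0)=\int_{-\delta_0}^{\delta_0}f_i\le 2C_0\delta_0=\tfrac14,\qquad i=1,\dots,n.
\]

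\medskip

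\textbf{Step 2 (counting) and conclusion.} Put $N:=\#\{i\le n\colon |X_i|\ge\delta_0\}$. By Step 1, $\Ex N=\sum_{i=1}^n\Pr(|X_i|\ge\delta_0)\ge\tfrac34 n$. Since $N$ takes values in $\{0,1,\dots,n\}$ and on the event $\{N<\lceil n/2\rceil\}$ one has $N\le \lceil n/2\rceil-1\le (n-1)/2$, it follows that
\[
\tfrac34 n\le \Ex N\le n\,\Pr\!\big(N\ge\lceil n/2\rceil\big)+\tfrac{n-1}{2},
\]
so $\Pr(N\ge\lceil n/2\rceil)\ge\tfrac14$. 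On the event $\{N\ge\lceil n/2\rceil\}$ at least $\lceil n/2\rceil$ of the numbers $|X_1|,\dots,|X_n|$ are $\ge\delta_0$, hence their $\lceil n/2\rceil$-th largest value satisfies $X^*_{\lceil n/2\rceil}\ge\delta_0$ there; indeed $\{N\ge\lceil n/2\rceil\}=\{X^*_{\lceil n/2\rceil}\ge\delta_0\}$. Therefore
\[
\Ex X^*_{\lceil n/2\rceil}\ge\delta_0\,\Pr\!\big(X^*_{\lceil n/2\rceil}\ge\delta_0\big)\ge\tfrac{\delta_0}{4}=\tfrac{1}{32C_0},
\]
which gives the claim with $C=32C_0$.

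\medskip

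The main (and essentially the only) point to get right is the uniform density bound of Step 1, which is where log-concavity is really used. It is worth stressing that weaker consequences of log-concavity, such as the reverse H\"older / moment-comparison inequality \eqref{eq:momlogc} for $X_i$, do \emph{not} suffice here: they bound $\Ex X_i^4$ by a constant but do not prevent $X_i$ from placing almost all of its mass in a tiny neighbourhood of $0$, which would make $\Pr(|X_i|<\delta_0)$ close to $1$ and collapse the counting argument. It is precisely the boundedness of the density that rules this out, and this is the step I would be careful to state with a clean reference (e.g. the standard facts on log-concave measures in \cite{AGM}) rather than reprove in detail.
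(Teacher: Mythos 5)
Your proof is correct and follows essentially the same strategy as the paper: establish a per-coordinate anti-concentration bound $\Pr(|X_i|\geq c)\geq 3/4$ and then run the identical first-moment counting argument on $N=\#\{i\colon |X_i|\geq c\}$. The only difference is cosmetic: the paper gets the anti-concentration from the $3/8$-quantile plus the log-concave tail decay and $\|X_i\|_2=1$, while you use the standard uniform bound on a unit-variance log-concave density; both are valid one-line consequences of log-concavity and isotropy.
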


\begin{proof}
Let $a_i>0$ be such that $\Pr(X_i\geq a_i)=3/8$. Then by the log-concavity of $X_i$, 
$\Pr(|X_i|\geq ta_i)=2\Pr(X_i\geq ta_i)\leq (3/4)^t$ for $t\geq 1$ and integration by parts 
yields $\|X_i\|_2\leq Ca_i$. Thus $a_i\geq c_1$ for a constant $c_1>0$.   

Let $S=\sum_{i=1}^n I_{\{|X_i|\geq c_1\}}$. Then $\Ex S=\sum_{i=1}^n\Pr(|X_i|\geq c_1)\geq 3n/4$. On the other
hand $\Ex S\leq \frac{n}{2}+n\Pr(X_{\lceil n/2\rceil }^*\geq c_1)$, so 
\[
\Ex X_{\lceil n/2\rceil}^*\geq c_1\Pr(X_{\lceil n/2\rceil}^*\geq c_1)\geq c_1/4.
\]
\end{proof}

The next example shows that the tail and moment bounds in Corollary \ref{cor:unclogcon} are optimal.

\medskip

\noindent
{\bf Example.} Let $X=(X_1,\ldots,X_n)$ be an isotropic random vector with i.i.d. symmetric exponential coordinates (i.e. $X$ has the density $2^{n/2}\exp(-\sqrt{2}\|x\|_1)$). Then $(\Ex|X_i|^p)^{1/p}\leq p/2$, so $\frac{2}{p}e_i\in \calm_p(X)$ and
\[
\Pr\left(\|X\|_{\calz_p(X)}\geq t\sqrt{n/p}\right)\geq \Pr(|X_i|\geq t\sqrt{np}/2)\geq e^{-t\sqrt{np}/\sqrt{2}} 
\]  
and for $q=s\sqrt{np}$, $s\geq 1$,
\[
\left(\Ex  \|X\|_{\calz_p(X)}^{q}\right)^{1/q}\geq \frac{2}{p}\|X_i\|_{q}\geq cq/p=cs\sqrt{n/p}.
\]

\section{General case -- approach via entropy numbers}

In this section we propose a method of deriving estimates for $\calz_p$-norms via entropy estimates for 
$\calm_p$-balls and Euclidean distance. We use a standard notation -- for sets $T,S\subset \er^n$, by $N(T,S)$ we denote the minimal number of translates of $S$ that are enough 
to cover $T$. If $S$ is the $\ve$-ball with respect to some translation-invariant metric $d$ then $N(T,S)$ is also denoted as $N(T,d,\ve)$ and is called the metric entropy of $T$ with respect to $d$.

We are mainly interested in log-concave vectors or random vectors which satisfy moment estimates
\begin{equation}
\label{eq:reggrow}
\|\langle t,X\rangle\|_p\leq \lambda \frac{p}{q}\|\langle t,X\rangle\|_q
\quad \mbox{ for }p\geq q\geq 2.
\end{equation}


Let us start with a simple bound.

\begin{prop}
\label{prop:ent_to_Zp}
Suppose that $X$ is isotropic in $\er^n$ and \eqref{eq:reggrow} holds. Then for any $p\geq 2$ and $\ve>0$ we have
\[
\left(\Ex\|X\|_{\calz_p(X)}^2\right)^{1/2}
\leq \ve\sqrt{n}+\frac{e\lambda}{p}\max\left\{p,\log N(\calm_p(X),\ve B_2^n)\right\}.
\]
\end{prop}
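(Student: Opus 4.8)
The plan is to reduce the supremum defining $\|X\|_{\calz_p(X)}$ to a maximum over a finite $\ve$-net of $\calm_p(X)$, control the net cardinality by $N(\calm_p(X),\ve B_2^n)$, and handle the approximation error using isotropy. First I would fix $\ve>0$ and let $S$ be an $\ve B_2^n$-net of $\calm_p(X)$ of cardinality $N:=N(\calm_p(X),\ve B_2^n)$. For any $s\in\calm_p(X)$ write $s=t+r$ with $t\in S$ and $|r|\le\ve$; then $|\langle s,X\rangle|\le\max_{t\in S}|\langle t,X\rangle|+\ve|X|$, so
\[
\|X\|_{\calz_p(X)}=\sup_{s\in\calm_p(X)}|\langle s,X\rangle|\le\max_{t\in S}|\langle t,X\rangle|+\ve|X|.
\]
Taking $L_2$-norms and using $(\Ex|X|^2)^{1/2}=\sqrt n$ (isotropy), it remains to bound $\bigl(\Ex\max_{t\in S}|\langle t,X\rangle|^2\bigr)^{1/2}$ by $\tfrac{e\lambda}{p}\max\{p,\log N\}$.

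For the finite-maximum term I would pass to a high moment to exploit the union bound efficiently. For any integer $m\ge 1$,
\[
\Ex\max_{t\in S}|\langle t,X\rangle|^{2m}\le\sum_{t\in S}\Ex|\langle t,X\rangle|^{2m}\le N\max_{t\in S}\|\langle t,X\rangle\|_{2m}^{2m},
\]
hence $\bigl(\Ex\max_{t\in S}|\langle t,X\rangle|^{2m}\bigr)^{1/(2m)}\le N^{1/(2m)}\max_{t\in S}\|\langle t,X\rangle\|_{2m}$. Now I invoke the regularity hypothesis \eqref{eq:reggrow}: for $t\in\calm_p(X)$ we have $\|\langle t,X\rangle\|_p\le 1$, and if $2m\ge p$ then $\|\langle t,X\rangle\|_{2m}\le\lambda\tfrac{2m}{p}\|\langle t,X\rangle\|_p\le\tfrac{2\lambda m}{p}$ (if $2m\le p$ then monotonicity of $L_r$-norms gives $\|\langle t,X\rangle\|_{2m}\le 1$, which is even smaller). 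Combining with $\Ex\|X\|_{\calz_p(X)}^2\le\bigl(\Ex\|X\|_{\calz_p(X)}^{2m}\bigr)^{1/m}$ after an application of the triangle inequality in $L_{2m}$ to the displayed pointwise bound, one gets, for every integer $m\ge p/2$,
\[
\bigl(\Ex\|X\|_{\calz_p(X)}^2\bigr)^{1/2}\le\ve\sqrt n+N^{1/(2m)}\frac{2\lambda m}{p}.
\]

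Finally I optimize over $m$. The natural choice is $m\approx\max\{p/2,\tfrac12\log N\}$, which makes $N^{1/(2m)}\le e$ (when $\log N\le 2m$) and yields $N^{1/(2m)}\tfrac{2\lambda m}{p}\le\tfrac{e\lambda}{p}\max\{p,\log N\}$, giving exactly the claimed bound; a small annoyance is that $m$ must be an integer, so I would take $m=\lceil\max\{p/2,\tfrac12\log N\}\rceil$ and absorb the rounding into the constant $e$ (checking that $e$, rather than a larger constant, still works, using $\lceil x\rceil\le x+1$ and the slack in $N^{1/(2m)}\le e$). The main obstacle — though a mild one — is precisely this bookkeeping: ensuring the integrality constraint on $m$ and the triangle-inequality loss in splitting off the $\ve|X|$ term do not inflate the constant beyond $e$, and confirming that the case $\log N\le p$ (where the bound should read $e\lambda$, i.e. $m=\lceil p/2\rceil$ and $N^{1/(2m)}\le e^{\log N/p}\le e$) is covered uniformly by the single expression $\max\{p,\log N\}$.
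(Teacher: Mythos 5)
Your proof follows essentially the same route as the paper: cover $\calm_p(X)$ by $N=N(\calm_p(X),\ve B_2^n)$ translates of $\ve B_2^n$, split off $\ve|X|$ via isotropy, and bound the finite maximum by a high moment combined with the union bound and the regularity hypothesis \eqref{eq:reggrow}. The one place you deviate is in insisting that the high moment be an even integer $2m$; this is unnecessary (the union bound $\Ex\max_t|\langle t,X\rangle|^{r}\le\sum_t\Ex|\langle t,X\rangle|^{r}$ and \eqref{eq:reggrow} hold for all real $r\ge 2$), and it is also where your claimed bookkeeping does not quite close: with $m=\lceil\max\{p/2,\tfrac12\log N\}\rceil$ and $r:=\max\{p,\log N\}$ you get $N^{1/(2m)}\cdot\lambda\tfrac{2m}{p}\le e^{r/(2m)}\lambda\tfrac{2m}{p}$, and the function $u\mapsto e^{1/u}u$ is increasing for $u\ge1$, so the rounding $2m\le r+2$ genuinely inflates the constant past $e$ (e.g.\ $p=2$, $\log N=3$ gives $2e^{3/4}\lambda\approx 4.23\lambda$ versus the claimed $\tfrac32 e\lambda\approx 4.08\lambda$). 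The paper simply takes the non-integer exponent $r=\max\{p,\log N\}$, which yields $N^{1/r}\le e$ and $\|\langle t,X\rangle\|_r\le\lambda\tfrac{r}{p}$ directly, with no loss; replacing your $2m$ by this $r$ repairs the argument and otherwise your proof is correct.
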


\begin{proof}
Let $N=N(\calm_p(X),\ve B_2^n)$ and choose $t_1,\ldots,t_N\in \calm_p(X)$ such that
$\calm_p(X)\subset \bigcup_{i=1}^N (t_i+\ve B_2^n)$. Then
\[
\|x\|_{\calz_p(X)}\leq \ve|x|+\sup_{i\leq N}\langle t_i,x\rangle.
\]
Let $r=\max\{p,\log N\}$. We have 
\begin{align*}
\left(\Ex \sup_{i\leq N}|\langle t_i,X\rangle|^2\right)^{1/2}
&\leq \left(\Ex \sup_{i\leq N}|\langle t_i,X\rangle|^r\right)^{1/r}
\leq \left(\sum_{i=1}^N\Ex|\langle t_i,X\rangle|^r\right)^{1/r}
\\
&\leq N^{1/r}\sup_i\|\langle t_i,X\rangle\|_r
\leq e\lambda\frac{r}{p}\sup_i\|\langle t_i,X\rangle\|_p\leq e\lambda\frac{r}{p}.
\end{align*}
\end{proof}

\begin{rem}
The Paouris inequality \cite{Pa} states that for isotropic log-concave vectors and $q\geq 2$,
$(\Ex|X|^q)^{1/q}\leq C(\sqrt{n}+q)$, so for such vectors and $q\geq 2$,
\[
\left(\Ex\|X\|_{\calz_p(X)}^q\right)^{1/q}
\leq C\ve(\sqrt{n}+q)+\frac{2e}{p}\max\{p,q,\log N(\calm_p(X),\ve B_2^n)\}.
\]
\end{rem}

Unfortunately, the known estimates for entropy numbers of $\calm_p$-balls are rather weak.

\begin{thm}[{\cite[Proposition 9.2.8]{BGVV}}]
\label{thm:entZplogc}
Assume that $X$ is isotropic log-concave and $2\leq p\leq \sqrt{n}$. Then
\[
\log N\left(\calm_p(X),\frac{t}{\sqrt{p}} B_2^n\right)
\leq C\frac{n\log^2 p\log t}{t}
\quad \mbox{ for }1\leq t\leq \min\left\{\sqrt{p},\frac{1}{C}\frac{n\log p}{p^2}\right\}.
\]
\end{thm}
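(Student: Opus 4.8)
The plan is to reduce the covering number of $\calm_p(X)$ by the Euclidean ball $\frac{t}{\sqrt p}B_2^n$ to a covering number of a section of $\calm_p(X)$ by a \emph{large} Euclidean ball, and then to estimate that quantity via the known bound on the mean width of $\calm_p(X)$ (for isotropic log-concave $X$ one has $\Ex\|X\|_{\calz_p(X)} \sim \mathrm{(width\ of\ }\calm_p(X)\mathrm{)}\sqrt n$ up to the factors this paper is quantifying, but more robustly one can use the classical fact that $\calm_p(X) \supset \frac{c}{\sqrt p} B_2^n$ and $\calm_p(X)$ has bounded volume radius of order $\sqrt{p/n}\cdot$(something)). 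Concretely, I would proceed in the following steps.

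First, I would record the two basic geometric facts about $K:=\calm_p(X)$ for isotropic log-concave $X$ in the range $2\le p\le\sqrt n$: the inclusion $\frac{c}{\sqrt p}B_2^n\subset K$ (which follows from $\|\langle t,X\rangle\|_p\ge\|\langle t,X\rangle\|_2=|t|$), and the volumetric estimate $|K|^{1/n}\le \frac{C}{\sqrt n}\,(\text{something like }p\log p / \sqrt n\text{-type bound})$ coming from the Lutwak--Yang--Zhang / Paouris-type volume estimates for $L_p$-centroid bodies. Combining these via the Santaló-type or direct volume-ratio computation, $N(K,\delta B_2^n)$ for $\delta$ comparable to $1/\sqrt p$ is governed by $(\mathrm{vol\ ratio})^n$, which is where the $n\log^2 p$ numerator and the $1/t$ will come from. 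Second, I would invoke the Sudakov / dual-Sudakov minoration or, more precisely in this regime, a direct covering argument: cover $K$ at scale $\frac{t}{\sqrt p}$ by first covering the ``large'' $\ell_2$-ball $\frac{1}{\sqrt p}B_2^n \subset K$-fraction and then handling the part of $K$ sticking out, using that $K\subset \sqrt p\,\calz_p(X)^\circ$-type inclusions give control of the diameter of $K$ of order $\sqrt n/\sqrt p$ up to logs. Third, I would optimize the resulting bound in the auxiliary parameters to extract the stated $\frac{n\log^2 p\,\log t}{t}$ with the stated range of validity $1\le t\le\min\{\sqrt p, \frac1C\frac{n\log p}{p^2}\}$, the upper restriction on $t$ being exactly what keeps the covering nontrivial (i.e.\ $t$ small enough that the $\ell_2$-ball of radius $t/\sqrt p$ is much smaller than $K$ itself).

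Since this theorem is quoted as \cite[Proposition 9.2.8]{BGVV}, the ``proof'' I would write is really a pointer plus a sketch: I would cite the monograph for the full argument and indicate that the mechanism is (i) the volumetric/mean-width estimates for $L_p$-centroid bodies of isotropic log-concave measures, due to Paouris and to Lutwak--Yang--Zhang, and (ii) a standard entropy extraction (dual Sudakov inequality combined with the volume-ratio bound), both of which are developed in earlier chapters of \cite{BGVV}.

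\medskip

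\noindent\textbf{The main obstacle.} The genuinely hard input — which I would \emph{not} reprove — is the sharp volume estimate for $L_p$-centroid bodies of isotropic log-concave measures in the regime $p\le\sqrt n$, i.e.\ bounds of the form $\mathrm{vol.rad.}(\calm_p(X)) \le \frac{C}{\sqrt n}\cdot(\text{polylog in }p)$, together with the reverse Santaló-type comparison needed to pass from volume to entropy at the specific scale $t/\sqrt p$. Getting the powers of $\log p$ exactly right (two of them) and the precise admissible range of $t$ requires the fine analysis in \cite{BGVV}; everything else is bookkeeping with the dual Sudakov inequality and the inclusion $\frac{c}{\sqrt p}B_2^n\subset\calm_p(X)$.
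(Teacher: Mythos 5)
The paper does not prove this statement at all: it is imported verbatim as a black box, the bracketed citation to \cite[Proposition 9.2.8]{BGVV} being the entire ``proof''. So the only part of your proposal that is actually needed --- the pointer to the monograph --- coincides with what the author does.

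The sketch you wrap around that citation, however, would not reconstruct the argument, and it contains concrete errors. First, the inclusion $\frac{c}{\sqrt p}B_2^n\subset\calm_p(X)$ is false for general isotropic log-concave $X$: by \eqref{eq:momlogc} one only has $\|\langle t,X\rangle\|_p\leq Cp|t|$, hence an inradius of order $1/p$, and this is sharp (take $X$ with i.i.d.\ symmetric exponential coordinates, where $\|X_1\|_p\sim p$). The inequality you invoke to justify it, $\|\langle t,X\rangle\|_p\geq\|\langle t,X\rangle\|_2=|t|$, in fact yields the \emph{opposite} inclusion $\calm_p(X)\subset B_2^n$; an inradius of order $1/\sqrt p$ is exactly the $\psi_2$ hypothesis \eqref{eq:gausgrow}, under which (as the Remark in Section 3 shows) the whole problem becomes easy. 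For the same reason your assertion that $\calm_p(X)$ has diameter of order $\sqrt{n/p}$ is off: it sits inside $B_2^n$. Second, the mechanism you propose cannot produce the stated bound. A volume-ratio estimate degrades only logarithmically in the scale and cannot yield the $n\log^2p\,\log t/t$ exponent, whose $1/t$ decay (giving $o(n)$ for $t$ large in the admissible range) is the entire content of the proposition. The Sudakov/dual-Sudakov route would require an a priori bound on $\ell^*(\calm_p(X))=\Ex\sup_{t\in\calm_p(X)}\langle t,G\rangle$, which is essentially the Gaussian version of the quantity the whole paper is trying to control --- so that approach is circular here. The actual proof in \cite{BGVV} is a finer decomposition/iteration argument specific to $L_q$-centroid bodies of log-concave measures, not bookkeeping with volumes. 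None of this affects the paper, since the result is quoted rather than proved, but as a standalone justification your sketch would not survive scrutiny.
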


\begin{cor}
Let $X$ be isotropic log-concave, then
\[
\left(\Ex\|X\|_{\calz_p(X)}^p\right)^{1/p}
\leq C\left(\frac{n}{p}\right)^{3/4}\log p\sqrt{\log n}
\quad \mbox{for }2\leq p\leq \frac{1}{C}n^{3/7}\log^{-2/7}n.
\]
\end{cor}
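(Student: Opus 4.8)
The plan is to combine the entropy bound of Theorem~\ref{thm:entZplogc} with Proposition~\ref{prop:ent_to_Zp} (and the Paouris-type remark that follows it), and then optimize over the free parameter $\ve$. Writing $\ve=t/\sqrt{p}$, Theorem~\ref{thm:entZplogc} gives $\log N(\calm_p(X),\ve B_2^n)\leq Cn\log^2 p\,\log t/t$ in the admissible range of $t$; since for isotropic log-concave $X$ the Paouris inequality yields $(\Ex|X|^p)^{1/p}\leq C(\sqrt n+p)\leq C\sqrt n$ (because $p\leq\sqrt n$ in this regime), plugging $q=p$ into the Remark after Proposition~\ref{prop:ent_to_Zp} gives
\[
\left(\Ex\|X\|_{\calz_p(X)}^p\right)^{1/p}
\leq C\frac{t}{\sqrt p}\sqrt n+\frac{C}{p}\max\left\{p,\ \frac{n\log^2 p\,\log t}{t}\right\}.
\]
The first term is $C t\sqrt{n/p}$; the second term, assuming $t$ is chosen so that the max is attained by the entropy piece, is $\frac{C}{p}\cdot\frac{n\log^2 p\,\log t}{t}=C\frac{n\log^2 p\,\log t}{pt}$.

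The next step is to balance the two terms $t\sqrt{n/p}$ and $\frac{n\log^2 p\,\log t}{pt}$ in $t$. Ignoring the slowly varying $\log t$ factor, equality occurs roughly when $t^2\sim\frac{\sqrt n\,\log^2 p}{\sqrt p}$, i.e. $t\sim \left(\frac{n}{p}\right)^{1/4}\log p$, up to a $\sqrt{\log t}$ correction; substituting $\log t\sim\log n$ (valid since $t$ is polynomial in $n$) refines this to $t\sim\left(\frac{n}{p}\right)^{1/4}\log p\,(\log n)^{1/2}$... but the cleaner route is simply to pick $t:=\left(\frac{n}{p}\right)^{1/4}\log p$ and check directly. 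With this choice the first term is $C\left(\frac{n}{p}\right)^{1/4}\log p\cdot\sqrt{\frac np}=C\left(\frac np\right)^{3/4}\log p$, and the second term is $C\frac{n\log^2 p\,\log t}{p}\cdot\left(\frac np\right)^{-1/4}\log^{-1} p=C\left(\frac np\right)^{3/4}\log p\cdot\log t$. Since $1\leq t\leq \mathrm{poly}(n)$ we have $\log t\leq C\log n$, so both terms are at most $C\left(\frac np\right)^{3/4}\log p\,\log n$. This is slightly worse than the claimed bound, which has $\sqrt{\log n}$ rather than $\log n$; to recover the stated exponent one chooses $t:=\left(\frac np\right)^{1/4}\log p\,(\log n)^{-1/2}$ instead, which makes the first term $C\left(\frac np\right)^{3/4}\log p\,(\log n)^{-1/2}\cdot\sqrt{\log n}/\sqrt{\log n}$... rather, the first term becomes $C\left(\frac np\right)^{3/4}\log p\,(\log n)^{-1/2}$ and the second becomes $C\left(\frac np\right)^{3/4}\log p\,(\log n)^{-1/2}\cdot\log n=C\left(\frac np\right)^{3/4}\log p\,(\log n)^{1/2}$, giving the asserted $C\left(\frac np\right)^{3/4}\log p\sqrt{\log n}$.

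The remaining—and genuinely fiddly—step is verifying that this choice of $t$ lies in the admissible range $1\leq t\leq\min\{\sqrt p,\ \frac{1}{C}\frac{n\log p}{p^2}\}$ of Theorem~\ref{thm:entZplogc}, and that the $\max$ in Proposition~\ref{prop:ent_to_Zp} is indeed realized by the entropy term (i.e. $\frac{n\log^2 p\,\log t}{t}\geq p$). The condition $t\leq\sqrt p$ reads $\left(\frac np\right)^{1/4}\log p\lesssim p^{1/2}$, i.e. $n\log^4 p\lesssim p^3$, which fails for $p$ small relative to $n$; so in fact the binding constraint must be handled by requiring $p$ not too large, and one sees that the hypothesis $p\leq\frac{1}{C}n^{3/7}\log^{-2/7}n$ is exactly what is needed to keep $t$ below $\frac{1}{C}\frac{n\log p}{p^2}$ (the constraint $t\leq\frac{1}{C}\frac{n\log p}{p^2}$ becomes $\left(\frac np\right)^{1/4}\lesssim\frac{n\log p}{p^2\log p}$, i.e. $p^{7/4}\lesssim n^{3/4}\log^{?} n$, matching $p\lesssim n^{3/7}$ up to log factors). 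I expect the main obstacle to be precisely this bookkeeping: tracking the $\log p$ versus $\log n$ factors through the two admissibility constraints and the term-balancing simultaneously, and confirming the exponent $3/7$ and the $\log^{-2/7}$ correction drop out cleanly; the probabilistic and geometric content is entirely supplied by the quoted results, so what remains is a careful but routine optimization.
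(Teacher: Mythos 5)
Your overall strategy is exactly the paper's: plug the entropy bound of Theorem~\ref{thm:entZplogc} into the Remark following Proposition~\ref{prop:ent_to_Zp} with $q=p$, $\ve=t/\sqrt p$, and balance the two terms in $t$. However, your final choice of $t$ is wrong, and it matters. The two terms are $A(t):=t\sqrt{n/p}$ and $B(t):=\frac{n\log^2p\,\log t}{pt}$, so their ratio $B/A$ scales like $t^{-2}$; it is not a constant multiple of $\log n$ independent of $t$. At $t_0=(n/p)^{1/4}\log p$ you correctly get $A\sim(n/p)^{3/4}\log p$ and $B\sim A\cdot\log t_0\sim A\log n$. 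Replacing $t_0$ by $t_0(\log n)^{-1/2}$ divides $A$ by $(\log n)^{1/2}$ but \emph{multiplies} $B$ by $(\log n)^{1/2}$, yielding $B\sim(n/p)^{3/4}\log p\,(\log n)^{3/2}$ --- worse than what you started with, not the claimed bound. The correct balancing goes the other way: take $t=(n/p)^{1/4}\log p\,(\log n)^{1/2}$ (this is precisely the paper's choice), which makes $A\sim(n/p)^{3/4}\log p\sqrt{\log n}$ and $B\sim A\cdot\frac{\log t}{\log n}\lesssim A$, since $\log t\leq C\log n$.

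The second gap is the admissibility check, which you flag but do not resolve, and which is not mere bookkeeping. You correctly observe that $t\leq\sqrt p$ fails when $p$ is small relative to $n$ (e.g.\ $p=2$), so Theorem~\ref{thm:entZplogc} cannot be invoked there at all with this $t$. The resolution is that in that regime the corollary is trivial: since $X$ is isotropic, $\|s\|_{\calm_p(X)}\geq\|s\|_{\calm_2(X)}=|s|$, so $\calm_p(X)\subset B_2^n$ and $N(\calm_p(X),B_2^n)=1$; taking $\ve=1$ in the Remark gives $(\Ex\|X\|_{\calz_p(X)}^p)^{1/p}\leq C(\sqrt n+p)\leq C\sqrt n$, and one checks that $\sqrt n\leq(n/p)^{3/4}\log p\sqrt{\log n}$ exactly when $p\lesssim n^{1/3}(\log p)^{4/3}(\log n)^{2/3}$, which is the complementary range to where $t\leq\sqrt p$ holds. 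The other constraint, $t\leq\frac1C\frac{n\log p}{p^2}$, is (as you guessed) what produces the restriction $p\leq\frac1Cn^{3/7}\log^{-2/7}n$: with the correct $t$ the comparison $(n/p)^{1/4}(\log n)^{1/2}\lesssim n/p^2$ reduces to $p^{7/4}\lesssim n^{3/4}(\log n)^{-1/2}$, i.e.\ $p\lesssim n^{3/7}(\log n)^{-2/7}$. So the skeleton of your argument is right, but as written it neither produces the stated exponent of $\log n$ nor covers the full range of $p$.
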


\begin{proof}
We apply  Theorem \ref{thm:entZplogc} with $t=(n/p)^{1/4}\log p\log^{1/2} n$ and Proposition \ref{prop:ent_to_Zp} 
with $\ve=tp^{-1/2}$.
\end{proof}

\begin{rem}
Suppose that $X$ is centered and the following stronger bound than \eqref{eq:reggrow} (satisfied for example for Gaussian vectors) holds
\begin{equation}
\label{eq:gausgrow}
\|\langle t,X\rangle\|_p\leq \lambda \sqrt{\frac{p}{q}}\|\langle t,X\rangle\|_q
\quad \mbox{ for }p\geq q\geq 2.
\end{equation}
Then for any $2\leq p\leq n$,
\[
\frac{1}{\lambda}\sqrt{\frac{2n}{p}}
\leq \left(\Ex\|X\|_{\calz_p(X)}^2\right)^{1/2}\leq
\left(\Ex\|X\|_{\calz_p(X)}^n\right)^{1/n}
\leq 10\lambda \sqrt{\frac{n}{p}}.
\]
\end{rem}

\begin{proof}
Without loss of generality we may assume that $X$ is isotropic. We have
\[
\|\langle t,X\rangle\|_p\leq \lambda\sqrt{p/2}\|\langle t,X\rangle\|_2=\lambda\sqrt{p/2}|t|,
\]
so $\calm_p(X)\supset \lambda^{-1}\sqrt{2/p}B_2^n$ and
\[
\left(\Ex\|X\|_{\calz_p(X)}^2\right)^{1/2}\geq \frac{1}{\lambda}\sqrt{\frac{2}{p}}
\left(\Ex|X|^2\right)^{1/2}=\frac{1}{\lambda}\sqrt{\frac{2n}{p}}.
\]

On the other hand let $S$ be a $1/2$-net in $\calm_p(X)$ of cardinality at most $5^n$. Then
\begin{align*}
\left(\Ex\|X\|_{\calz_p(X)}^n\right)^{1/n}
&\leq 2\left(\Ex\sup_{t\in S}|\langle t,X\rangle|^n\right)^{1/n}
\leq 2\left(\sum_{t\in S}\Ex|\langle t,X\rangle|^n\right)^{1/n}
\\
&\leq 2|S|^{1/n}\sup_{t\in S}\|\langle t,X\rangle\|_n
\leq 10\lambda\sqrt{\frac{n}{p}}\sup_{t\in S}\|\langle t,X\rangle\|_p\leq 10\lambda\sqrt{\frac{n}{p}}.
\end{align*}

\end{proof}

Recall that the Sudakov minoration principle \cite{Su} states that if $G$ is an isotropic Gaussian vector in $\er^n$
then for any bounded $T\subset \er^n$ and $\ve>0$,
\[
\Ex\sup_{t\in T}\langle t,G\rangle\geq\frac{1}{C}\ve\sqrt{\log N(T,\ve B_2^n)}.
\]
So we can say that a random vector $X$ in $\er^n$ satisfies 
\emph{the $L_2$-Sudakov minoration with a constant $C_X$} if for any bounded $T\subset \er^n$ and $\ve>0$,
\[
\Ex\sup_{t\in T}\langle t,X\rangle\geq\frac{1}{C_X}\ve\sqrt{\log N(T,\ve B_2^n)}.
\]

\medskip

\noindent
{\bf Example.}
Any unconditional $n$-dimensional random vector satisfies the $L_2$-Sudakov minoration with constant
$C\sqrt{\log (n+1)}/(\min_{i\leq n} \Ex|X_i|)$.

Indeed, we have by the unconditionality, Jensen's inequality and the contraction principle,
\[
\Ex\sup_{t\in T}\sum_{i=1}^nt_iX_i=\Ex\sup_{t\in T}\sum_{i=1}^nt_i\ve_i |X_i|\geq
\Ex\sup_{t\in T}\sum_{i=1}^nt_i\ve_i \Ex|X_i|\geq \min_{i\leq n}\Ex |X_i|\Ex\sup_{t\in T}\sum_{i=1}^nt_i\ve_i.
\]
On the other hand, the classical Sudakov minoration and the contraction principle yields
\begin{align*}
\frac{1}{C}\ve\sqrt{\log N(T,\ve B_2^n)}
&\leq \Ex\sup_{t\in T}\sum_{i=1}^nt_i g_i\leq
\Ex \max_{i\leq n}|g_i|\Ex\sup_{t\in T}\sum_{i=1}^nt_i \ve_i
\\
&\leq C\sqrt{\log (n+1)}\Ex\sup_{t\in T}\sum_{i=1}^nt_i \ve_i.
\end{align*}

However the $L_2$-Sudakov minoration constant may be quite large in the isotropic case even for
unconditional vectors if we do not assume that $L_1$ and $L_2$ norms of $X_i$ are comparable. Indeed, 
let $\Pr(X=\pm n^{1/2}e_i)=\frac{1}{2n}$ for $i=1,\ldots,n$, where $e_1,\ldots,e_n$ is the canonical
basis of $\er^n$. Then $X$ is isotropic and unconditional.  Let $T=\{t\in \er^n\colon\ \|t\|_\infty\leq n^{-1/2}\}$. Then
\[
\Ex\sup_{t\in T}|\langle t,X\rangle|\leq 1.
\]
However, by the volume-based estimate,
\[
N(T,\ve B_2^n)\geq \frac{\mathrm{vol}(T)}{\mathrm{vol}(\ve B_2^n)}\geq \left(\frac{1}{C\ve}\right)^n,
\]
hence
\[
\sup_{\ve>0}\ve\sqrt{\log N((T,\ve B_2^n)}\geq \frac{1}{C}\sqrt{n}.
\]
Thus the $L_2$-Sudakov constant $C_X\geq \sqrt{n}/C$ in this case.

\medskip

Next proposition shows that random vectors with uniformly log-convex density satisfy the $L_2$-Sudakov minoration.

\begin{prop}
Suppose that a symmetric random vector $X$ in $\er^n$ has the density of the form $e^h$ such that 
$\mathrm{Hess}(h)\geq -\alpha \mathrm{Id}$ for some $\alpha>0$. Then $X$ satisfies the $L_2$-Sudakov minoration with constant $C_X\leq C\sqrt{\alpha}$.
\end{prop}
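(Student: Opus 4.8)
The plan is to reduce the $L_2$-Sudakov minoration for $X$ to the classical Gaussian Sudakov minoration via a comparison of $\Ex\sup_{t\in T}\langle t,X\rangle$ with $\Ex\sup_{t\in T}\langle t,G\rangle$, where $G$ is the standard Gaussian vector in $\er^n$. Since $\mathrm{Hess}(h)\geq-\alpha\mathrm{Id}$, the function $x\mapsto -h(x)+\frac{\alpha}{2}|x|^2$ is convex, so the density of $X$ can be written as $e^{h(x)}=e^{-\frac{\alpha}{2}|x|^2}e^{g(x)}$ with $g$ concave; equivalently, up to the normalizing constant, $X$ has density proportional to the Gaussian density with covariance $\alpha^{-1}\mathrm{Id}$ multiplied by a log-concave factor. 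The natural tool here is a Gaussian-type convexity/correlation estimate: a random vector whose density is a log-concave perturbation of a Gaussian density satisfies, for every symmetric convex body $K$ and every $s$, comparison inequalities that let one transfer Sudakov-type lower bounds. Concretely, I would first record the decomposition of the density and identify the ``reference'' Gaussian $G_\alpha$ with covariance $\alpha^{-1}\mathrm{Id}$, so that $\sqrt{\alpha}\,G_\alpha$ is standard Gaussian and the classical Sudakov bound gives $\Ex\sup_{t\in T}\langle t,G_\alpha\rangle\geq\frac{1}{C}\alpha^{-1/2}\ve\sqrt{\log N(T,\ve B_2^n)}$.

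The key step is then the inequality $\Ex\sup_{t\in T}\langle t,X\rangle\geq c\,\Ex\sup_{t\in T}\langle t,G_\alpha\rangle$ for every bounded $T$, with an absolute constant $c>0$. The cleanest route uses the fact that the supremum $F(x):=\sup_{t\in T}\langle t,x\rangle$ is a convex (indeed seminorm-like, homogeneous, subadditive) function. By symmetry of $X$ one may also work with $\Ex\sup_{t\in T\cup(-T)}|\langle t,x\rangle|$, which is a genuine norm; bounds for expectations of norms under such densities reduce, via Ehrhard- or Gaussian-correlation-type arguments, to the corresponding Gaussian expectation. A particularly robust approach: write $X$ in distribution as the Gaussian $G_\alpha$ conditioned/tilted by the log-concave factor, and use that for convex $F$ and a probability measure that is a log-concave reweighting of a Gaussian, one has $\Ex_X F(X)\geq c\,\Ex_{G_\alpha}F(G_\alpha)$ — this is exactly the phenomenon that ``log-concave perturbations do not decrease expectations of even convex functions by more than a constant.'' Invoking this with $F=\sup_{t\in T}\langle t,\cdot\rangle$ (and symmetrization so only the even part matters) gives the comparison. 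Chaining the two displays yields
\[
\Ex\sup_{t\in T}\langle t,X\rangle\geq c\,\Ex\sup_{t\in T}\langle t,G_\alpha\rangle\geq\frac{c}{C}\frac{1}{\sqrt{\alpha}}\ve\sqrt{\log N(T,\ve B_2^n)},
\]
which is the asserted $L_2$-Sudakov minoration with $C_X\leq C\sqrt{\alpha}$.

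The main obstacle is justifying the comparison $\Ex_X F(X)\geq c\,\Ex_{G_\alpha}F(G_\alpha)$ rigorously with an absolute constant, since the log-concave tilting factor $e^{g}$ is not normalized and could a priori concentrate $X$ near the origin, killing the supremum. I would handle this by a ``peeling'' or localization argument: the normalizing constant of $e^{g}$ against the Gaussian density is controlled because $g$ is concave and the density integrates to one, and one can use the fact that a log-concave density, being a reweighting of the Gaussian, still puts at least a constant fraction of its mass on a Euclidean ball of radius comparable to $\alpha^{-1/2}\sqrt{n}$ (or, more to the point, reproduces a constant fraction of the Gaussian's small-ball/large-ball structure along every direction). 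An alternative that sidesteps normalization issues entirely is to use the Gaussian logarithmic Sobolev / transportation inequality: the Bakry–Émery criterion shows that $X$ satisfies a Poincaré inequality with constant $O(\alpha)$, and one can then run a direct chaining lower bound for $\Ex\sup_{t\in T}\langle t,X\rangle$ using that each increment $\langle t,X\rangle-\langle t',X\rangle$ is subgaussian with variance proxy $O(\alpha^{-1}|t-t'|^2)$, together with a Sudakov minoration valid under a Poincaré inequality (a dual form of the Sudakov–Fernique circle of ideas). Whichever of these two implementations is chosen, the remaining steps — the density decomposition, the invocation of classical Gaussian Sudakov, and the final chaining — are routine.
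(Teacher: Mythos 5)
Your reduction starts from a decomposition with the wrong sign: $\mathrm{Hess}(h)\geq-\alpha\,\mathrm{Id}$ says that $h(x)+\frac{\alpha}{2}|x|^2$ is convex, so $e^{h}=e^{-\alpha|x|^2/2}e^{g}$ with $g$ \emph{convex}; the density is a log-\emph{convex}, not log-concave, perturbation of the Gaussian $G_\alpha$ (your statement that $-h(x)+\frac{\alpha}{2}|x|^2$ is convex is the condition $\mathrm{Hess}(h)\leq\alpha\,\mathrm{Id}$). More importantly, the key lemma you invoke --- that a log-concave reweighting of a Gaussian decreases $\Ex F$ by at most a constant factor for convex symmetric $F$ --- is false as stated: reweighting $N(0,\alpha^{-1}\mathrm{Id})$ by the indicator of a tiny ball (a log-concave factor) produces a vector for which $\Ex\|X\|$ is arbitrarily small compared to $\Ex\|G_\alpha\|$, for any norm. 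So the central comparison $\Ex\sup_{t\in T}\langle t,X\rangle\geq c\,\Ex\sup_{t\in T}\langle t,G_\alpha\rangle$ is neither proved nor provable by the route you describe. The fallback via Bakry--\'Emery also misreads the hypothesis: that criterion requires $\mathrm{Hess}(-h)\geq\rho\,\mathrm{Id}$ (uniform log-concavity), whereas here one only has $\mathrm{Hess}(-h)\leq\alpha\,\mathrm{Id}$, so no Poincar\'e inequality or subgaussian increment bound follows. Indeed the hypothesis is compatible with heavy tails --- in dimension one the Cauchy density $\pi^{-1}(1+x^2)^{-1}$ satisfies $\mathrm{Hess}(h)\geq-2\,\mathrm{Id}$ --- so any argument relying on upper tail control of $\langle t,X\rangle$ must fail.

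The paper uses only a one-sided consequence of the hypothesis: for a symmetric density and a symmetric set $K$, averaging $e^{h(x-t)}$ and $e^{h(x+t)}$ and applying $\frac{1}{2}(h(x-t)+h(x+t))\geq h(x)-\frac{\alpha}{2}|t|^2$ (Taylor expansion plus the Hessian lower bound) yields the shift inequality $\mu(t+K)\geq e^{-\alpha|t|^2/2}\mu(K)$. This is exactly the Gaussian shift property that drives the classical \emph{dual} Sudakov inequality, and the paper runs that argument verbatim: pack $\delta B_2^n$ with disjoint translates of $3AT^o$ where $A=\Ex\sup_{t\in T}|\langle t,X\rangle|$, use Chebyshev to get $\mu(3AT^o)\geq 2/3$, conclude $N(\delta B_2^n,6AT^o)\leq\exp(C\alpha\delta^2)$, and convert this to the stated minoration via the Artstein--Milman--Szarek duality of entropy numbers. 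If you want to salvage your outline, this shift inequality is the correct replacement for your comparison lemma; no comparison with a reference Gaussian is needed.
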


\begin{proof}
We will follow the method of the proof of the (dual) classical Sudakov inequality (cf. (3.15) and its proof in \cite{LeT}).

Let $T$ be a bounded symmetric set and
\[
A:=\Ex \sup_{t\in T}|\langle t,X\rangle|.
\]

By the duality of entropy numbers \cite{AMS} we need to show that  
$\log^{1/2} N(\ve^{-1}B_2^n,T^o)\leq C\ve^{-1}\alpha^{1/2} A$ for $\ve >0$ or equivalently that 
\begin{equation}
\label{eq:sudestalpha}
N(\delta B_2^n, 6A T^o)\leq \exp(C\alpha \delta^2)
\quad \mbox{ for }\delta >0. 
\end{equation}

To this end let $N=N(\delta B_2^n, 6A T^o)$. If $N=1$ there is nothing to show, so assume that $N\geq 2$. Then we may choose $t_1,\ldots,t_N\in \delta B_2^n$ such
that the balls $t_i+3AT^0$ are disjoint. Let $\mu=\mu_X$ be the distribution of $X$. By the Chebyshev inequality,
\[
\mu(3A T^0)=1-\Pr\left(\sup_{t\in T}|\langle t,X\rangle|>3A\right)\geq \frac{2}{3}.
\] 
Observe also that for any symmetric set $K$ and $t\in \er^n$,
\begin{align*}
\mu(t+K)
&=\int_{K}e^{h(x-t)}dx=\int_{K}e^{h(x+t)}dx=\int_K \frac{1}{2}(e^{h(x-t)}+e^{h(x+t)})dx
\\
&\geq \int_K e^{(h(x-t)+h(x+t))/2}dx.
\end{align*}
By Taylor's expansion we have for some $\theta\in [0,1]$,
\[
\frac{h(x-t)+h(x+t)}{2}
=h(x)
+\frac{1}{4}(\langle\mathrm{Hess}h(x+\theta t)t,t\rangle+\langle\mathrm{Hess}h(x-\theta t)t,t\rangle)\geq
h(x)-\frac{1}{2}\alpha|t|^2.
\]
Thus
\[
\mu(t+K)\geq \int_K e^{h(x)-\alpha|t|^2/2}= e^{-\alpha|t|^2/2}\mu(K)
\]
and
\[
1\geq \sum_{i=1}^N\mu(t_i+3AT^0)\geq \sum_{i=1}^Ne^{-\alpha|t_i|^2/2}\mu(3AT^0)\geq
\frac{2N}{3}e^{-\alpha \delta^2/2}\geq N^{1/3}e^{-\alpha \delta^2/2}
\]
and \eqref{eq:sudestalpha} easily follows.

\end{proof}

\begin{prop}
Suppose that $X$ satisfies the $L_2$-Sudakov minoration with constant $C_X$. Then for any $p\geq 2$
\[
N\left(\calm_p(X),\frac{eC_X}{\sqrt{p}}B_2^n\right)\leq e^p.
\]
In particular if $X$ is isotropic we have for $2\leq p\leq n$,
\[
\left(\Ex \|X\|_{\calz_p(X)}^2\right)^{1/2}\leq e\left(C_X\sqrt{\frac{n}{p}}+1\right).
\]
\end{prop}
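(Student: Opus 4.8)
The plan is to establish the covering estimate first and then to deduce the $\calz_p$‑norm bound from it by the net argument already used in Proposition~\ref{prop:ent_to_Zp}.

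For the covering estimate I would argue by contradiction. Put $\ve:=eC_X/\sqrt p$ and suppose $N:=N(\calm_p(X),\ve B_2^n)>e^p$. Running a greedy selection inside $\calm_p(X)$ — as long as fewer than $N$ points have been chosen the balls centred at them cannot cover $\calm_p(X)$, so a further point of $\calm_p(X)$ at Euclidean distance $>\ve$ from all of them may be added — produces $t_1,\dots,t_N\in\calm_p(X)$ that are pairwise $\ve$‑separated; retain $M:=\lceil e^p\rceil\ (\le N)$ of them. For the finite set $S=\{t_1,\dots,t_M\}$ a ball $\tfrac\ve2 B_2^n$ has diameter $\ve$, hence contains at most one point of $S$, so $N(S,\tfrac\ve2 B_2^n)=M$. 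Applying the $L_2$‑Sudakov minoration to $S$ at scale $\ve/2$ gives
\[
\Ex\max_{i\le M}\langle t_i,X\rangle\ \ge\ \frac1{C_X}\cdot\frac\ve2\cdot\sqrt{\log M}\ \ge\ \frac1{C_X}\cdot\frac\ve2\cdot\sqrt p\ =\ \frac e2 .
\]
On the other hand, since each $t_i\in\calm_p(X)$ we have $\|\langle t_i,X\rangle\|_p\le1$, so a union bound over the $M$ variables yields
\[
\Ex\max_{i\le M}\langle t_i,X\rangle\ \le\ \Bigl(\Ex\max_{i\le M}|\langle t_i,X\rangle|^p\Bigr)^{1/p}\ \le\ \Bigl(\sum_{i\le M}\Ex|\langle t_i,X\rangle|^p\Bigr)^{1/p}\ \le\ M^{1/p}.
\]
The role of the precise radius $eC_X/\sqrt p$ (and, symmetrically, of the exponent $p$ in $e^p$) is that these two inequalities are meant to be incompatible; making the Sudakov lower bound genuinely beat the elementary moment bound at this radius — which I expect will require running the minoration simultaneously along a geometric sequence of scales rather than at a single one, so as to squeeze out the remaining constant — is where the real work lies. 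This is the step I expect to be the main obstacle.

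Once the covering estimate is available, the isotropic bound is routine. Fix $\ve:=eC_X/\sqrt p$ and choose $t_1,\dots,t_N\in\calm_p(X)$ with $N\le e^p$ and $\calm_p(X)\subseteq\bigcup_{i\le N}(t_i+\ve B_2^n)$. For every $x$,
\[
\|x\|_{\calz_p(X)}=\sup_{t\in\calm_p(X)}\langle t,x\rangle\le\ve|x|+\max_{i\le N}\langle t_i,x\rangle ,
\]
since any $t\in\calm_p(X)$ is within Euclidean distance $\ve$ of some $t_i$. Taking $L_2$‑norms in $X$, using isotropy ($(\Ex|X|^2)^{1/2}=\sqrt n$), the inequality $\|\,\cdot\,\|_2\le\|\,\cdot\,\|_p$ for $p\ge2$, the bound $\|\langle t_i,X\rangle\|_p\le1$, and $N\le e^p$, one gets
\[
\bigl(\Ex\|X\|_{\calz_p(X)}^2\bigr)^{1/2}\le\ve\sqrt n+\Bigl(\Ex\max_{i\le N}|\langle t_i,X\rangle|^p\Bigr)^{1/p}\le\ve\sqrt n+N^{1/p}\le e\Bigl(C_X\sqrt{\tfrac np}+1\Bigr),
\]
which is the claimed estimate; the restriction $p\le n$ is used only to ensure that the radius $\ve$ is at most of order $1$, so that the covering is meaningful.
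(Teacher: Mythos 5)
Your argument is the paper's argument: a greedy selection of $\ve$-separated points in $\calm_p(X)$, a single application of the $L_2$-Sudakov minoration to a separated subset of cardinality $\lceil e^p\rceil$, the union bound $\Ex\max_{i\le M}\langle t_i,X\rangle\le M^{1/p}$ coming from $\|\langle t_i,X\rangle\|_p\le 1$, and then the net decomposition $\|x\|_{\calz_p(X)}\le\ve|x|+\max_{i}\langle t_i,x\rangle$ for the second claim. The second half of your write-up is complete and coincides with the paper's.

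The ``main obstacle'' you flag is exactly the point at which the paper is cavalier rather than in possession of an extra idea. The paper applies the minoration to the separated set at the separation scale itself: it takes $N(\{t_1,\dots,t_N\},\ve B_2^n)=N$ for $\ve$-separated points and thus gets the lower bound $\frac1{C_X}\,\ve\sqrt{\log N}$ with no factor $\frac12$ --- precisely the factor you (correctly) lose by passing to radius $\ve/2$, where the covering number of a separated set genuinely equals its cardinality. No multi-scale refinement is hiding anywhere, and none would help: running the minoration at several scales only produces several lower bounds for the same expectation and cannot recover a constant. The honest repair is simply to enlarge the radius. With $\delta=\kappa C_X/\sqrt p$ for a suitable absolute constant $\kappa$ (say $\kappa=3e$), your two displays become
\[
\Ex\max_{i\le M}\langle t_i,X\rangle\ \ge\ \frac{\kappa}{2}\sqrt{\frac{\log M}{p}}\ \ge\ \frac{3e}{2}
\qquad\text{and}\qquad
\Ex\max_{i\le M}\langle t_i,X\rangle\ \le\ M^{1/p}\ \le\ e\,e^{e^{-p}/p}\ <\ \frac{3e}{2},
\]
which are incompatible, so $N\bigl(\calm_p(X),\tfrac{3eC_X}{\sqrt p}B_2^n\bigr)\le e^p$; the second part then gives $\bigl(\Ex\|X\|_{\calz_p(X)}^2\bigr)^{1/2}\le 3eC_X\sqrt{n/p}+e$. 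Since $C_X$ is itself only determined up to the absolute constant chosen in the definition of the minoration, this changes nothing of substance. In short: your proof is the intended one; the step you could not close at the stated constants cannot be closed at those constants by this method, and the paper closes it only by eliding the packing-versus-covering factor of two (and the fact that $\lceil e^p\rceil^{1/p}$ slightly exceeds $e$).
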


\begin{proof}
Suppose that $N=N(\calm_p(X),eC_Xp^{-1/2}B_2^n)\geq e^p$. We can choose
$t_1,\ldots,t_N\in \calm_p(X)$, such that $\|t_i-t_j\|_2\geq eC_Xp^{-1/2}$.
We have
\[
\Ex\sup_{i\geq N}\langle t_i,X\rangle\geq \frac{1}{C_X}eC_Xp^{-1/2}\sqrt{\log N}> e.
\]
However on the other hand,
\[
\Ex\sup_{i\geq N}\langle t_i,X\rangle
\leq \left(\Ex\sup_{i\geq N}|\langle t_i,X\rangle|^p\right)^{1/p}
\leq \left(\sum_{i\geq N}\Ex|\langle t_i,X\rangle|^p\right)^{1/p}
\leq N^{1/p}\max_{i}\|\langle t_i,X\rangle\|_p\leq e.
\]

To show the second estimate we proceed in a similar way as in the proof of Proposition \ref{prop:ent_to_Zp}. 
We choose $T\subset \calm_p(X)$ such that $|T|\leq e^p$
and $\calm_p(X)\subset T+eC_Xp^{-1/2}B_2^n$.
We have
\[
\|X\|_{\calz_p(X)}\leq eC_Xp^{-1/2}|X|+\sup_{t\in T}|\langle t,X\rangle|
\]
so that
\[
\left(\Ex \|X\|_{\calz_p(X)}^2\right)^{1/2}\leq eC_Xp^{-1/2}(\Ex|X|^2)^{1/2}
+\left(\Ex\sup_{t\in T}|\langle t,X\rangle|^2\right)^{1/2}.
\]
Vector $X$ is isotropic, so $\Ex|X|^2=n$ and since $T\subset \calm_p(X)$ and $p\geq 2$ we get
\begin{align*}
\left(\Ex\sup_{t\in T}|\langle t,X\rangle|^2\right)^{1/2}
&\leq
\left(\Ex\sup_{t\in T}|\langle t,X\rangle|^p\right)^{1/p}
\leq 
\left(\sum_{t\in T}\Ex|\langle t,X\rangle|^p\right)^{1/p}
\\
&\leq |T|^{1/p}\max_{t\in T}\||\langle t,X\rangle\|_p\leq e.
\end{align*}

\end{proof}

\noindent
Rafa{\l} Lata{\l}a\\
Institute of Mathematics\\
University of Warsaw\\
Banacha 2\\
02-097 Warszawa\\
Poland\\
{\tt rlatala@mimuw.edu.pl}

\end{document}